\documentclass[a4paper]{amsart}
\usepackage{fullpage}
\usepackage{amsmath,amssymb,mathtools,booktabs,tikz}
\usetikzlibrary{arrows.meta,calc,decorations.markings,positioning}
\usepackage[T1]{fontenc}
\usepackage{lmodern}
\usepackage{microtype}
\usepackage{needspace}
\usepackage{tabularx,array}
\usepackage[hypertexnames=false]{hyperref}
\usepackage{doi}
\usepackage{float}
\hypersetup{
    colorlinks=true,
    linkcolor=blue,
    citecolor=blue,
    urlcolor=blue
}
\hypersetup{
    pdftitle={Entire solutions of higher order Briot--Bouquet equations},
    pdfauthor={Alexandre Eremenko, Tuen Wai Ng, Teng Zhang}
}
\numberwithin{equation}{section}
\newtheorem{thm}{Theorem}[section]
\newtheorem{lem}[thm]{Lemma}
\newtheorem{prop}[thm]{Proposition}
\newtheorem{cor}[thm]{Corollary}
\theoremstyle{remark}

\theoremstyle{plain}

\newtheorem*{picard}{Picard's Theorem}

\newcommand{\C}{\mathbb C}
\newcommand{\Z}{\mathbb Z}
\newcommand{\N}{\mathbb N}

\newcommand{\Chat}{\widehat{\mathbb C}}
\newcommand{\D}{\mathbb D}
\newcommand{\W}{\mathcal W}
\newcommand{\B}{\mathcal B}
\DeclareMathOperator{\conv}{conv}

\DeclareMathOperator{\Rea}{Re}
\newcommand{\ee}{\mathrm e}
\newcommand{\dd}{\,\mathrm d}
\newcommand{\ii}{\mathrm i}

\begin{document}

\title[Entire solutions of higher order Briot--Bouquet equations]
{Entire solutions of higher order
Briot--Bouquet equations}
\author[A.~Eremenko]{Alexandre Eremenko}
\address{Mathematics Department, Purdue University,
West Lafayette, IN 47907, USA}
\email{eremenko@purdue.edu}

\author[T.~W.~Ng]{Tuen Wai Ng}
\address{Department of Mathematics, The University of Hong Kong,
Pokfulam, Hong Kong}
\email{ntw@maths.hku.hk}

\author[T.~Zhang]{Teng Zhang}
\address{School of Mathematics and Statistics,
Xi'an Jiaotong University, Xi'an 710049, P.~R.~China}
\email{teng.zhang@stu.xjtu.edu.cn}

\subjclass[2020]{Primary 30D05, 34M05; Secondary 34A09}
\keywords{Higher order Briot--Bouquet equation; entire function;
Wiman--Valiron theory; indicator diagram; Laplace--Borel transform}

\begin{abstract}
For a meromorphic function $f$ in the complex plane, we determine when
$f$ and $f^{(k)}$ are algebraically dependent. This happens precisely
when $f$ belongs to the class $\W$ of elliptic functions and their
degenerates, or satisfies a linear differential equation
$f^{(k)}+af+b=0$ with constant coefficients. The case of functions
having a pole was settled previously. We complete the proof by
treating entire solutions. 
\end{abstract}

\maketitle
\section{Introduction}\label{sec:introduction}

The classical theorem of Briot and Bouquet \cite{BB56},
dating back to the mid-19th century,  asserts that all meromorphic
solutions of differential equations of the form
\[
 F(y,y')=0,\qquad F\in\C[X,Y],
\]
are functions of the class $W$, which consists of all elliptic functions, rational
functions and functions of the form $R(\exp(cz))$, where $R$ is rational and $c$ is
a constant. An equivalent formulation which was preferred in the 19th century
is: \emph{An inverse of an Abelian integral is meromorphic in the plane only if it
is an elliptic function, possibly degenerate.}

The class $W$ can also be characterized as the class of all meromorphic functions
which admit an algebraic addition theorem. This is a theorem of Weierstrass
(see, for example, \cite[Chapter.~XI, \S~62]{Akh90}), and the class is named in his honor.

Picard \cite{Pic80}, after proving his famous theorems on omitted values and
on the uniformization of algebraic curves by meromorphic functions in the
plane (which we state precisely below) gives the following theorem as his first
application:

\begin{picard}
All meromorphic solutions of differential equations
\[
 F(y,y'')=0,\qquad F\in\C[X,Y]
\]
belong to the class $W$.
\end{picard}

Almost a century later, Einar Hille began to study analogous differential
equations of higher order \cite{Hil76,Hil78a,Hil78b,Hil78c,Hil78d}
\begin{equation}\label{eq:bb}
 F(y,y^{(k)})=0,\qquad F\in\C[X,Y], \quad k\in \mathbb N_+
\end{equation}
which he called the higher order Briot--Bouquet equations. He did not know
Picard's Theorem, and stated it as a conjecture, proving it under some additional
conditions. Hille's conjecture was proved by Bank and Kaufman
\cite{BK81}, about 100 years after Picard's work.
Hille also studied necessary conditions for the existence of
meromorphic solutions and their growth. For example,
\cite{Hil78d} gives an upper bound of one
for the order of entire solutions under certain degree assumptions.
He also studied the singularities of non-meromorphic solutions.

Hille's ideas were developed in \cite{Ere82}, where a conjecture that all meromorphic
solutions of equations \eqref{eq:bb} belong to $W$ was stated. This conjecture has
an evident exception, namely
\[
 y'''-y=0
\]
which has solutions outside $\W$. So the correct conjecture is that all
meromorphic solutions of \eqref{eq:bb} either belong to $W$ or satisfy linear
differential equations of the form $y^{(k)}+ay+b=0$. This was proposed in
\cite{ELN09}, where the conjecture was proved for meromorphic functions with at least
one pole. In this paper, we complete the proof of this conjecture.

In \cite{Ere82}, necessary conditions on $F$ for existence of meromorphic solutions
were proved, as well as the fact that when the curve $F(X,Y)=0$ has genus
1, then all meromorphic solutions are elliptic functions.

It was also proved in \cite{Ere82} that for odd $k$, all meromorphic solutions having
one pole must belong to the class $W$. This was later extended to all $k$ in \cite{ELN09}.
So the only remaining question is about entire solutions.

In this paper we complete this line by treating the case of entire solutions.
In addition we give a new proof of the necessary conditions on the polynomial
$F$ in \eqref{eq:bb} from \cite[Theorem~1]{Ere82}. The reasons for including this new proof are the
following.

The original proof of the necessary conditions in \cite{Ere82} contained a gap; a
generalization of the Wiman--Valiron theory which was unproved at that time
was used. This gap was removed by bypassing this unproved statement in
the first-named author's thesis \cite{Ere86}, but this thesis was never published. 22
years later, Bergweiler, Rippon and Stallard \cite{BRS08} proved the statement whose
proof was missing in \cite{Ere82}, thus justifying all arguments in \cite{Ere82}. Since the result
in \cite{BRS08} is highly non-trivial, we include here the proof from the thesis \cite{Ere86} which
does not rely on \cite{BRS08}.

The second reason is to make explicit the reduction used below.
Proposition~\ref{prop:necessary} and its corollaries reduce the
remaining entire case to \eqref{eq:yanchenko}; this reduction was
recorded in \cite{Ere24}.
Yanchenko \cite{Yan22} proved the conjecture for entire
solutions when the highest homogeneous part of $F$ is a product
of distinct linear factors.

The main result of this paper is

\begin{thm}\label{thm:pdf-main}
Every transcendental entire solution of an irreducible equation
\eqref{eq:bb} is of the form $R(e^{cz})$, where $R$ is rational, and $c\in\C^*$, except for the
case when \eqref{eq:bb} is a linear equation.
\end{thm}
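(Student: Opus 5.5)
We argue in two stages. First we reduce to a normal form via the necessary conditions; then we treat that normal form with Wiman--Valiron theory, indicator diagrams and the Laplace--Borel transform.

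\emph{Stage 1: necessary conditions.} Let $f$ be a transcendental entire solution of an irreducible equation \eqref{eq:bb}. We use Wiman--Valiron theory in the form of the thesis argument, not the full result of \cite{BRS08}. Near points where $|f(z)|=M(r,f)$ we have $f^{(k)}(z)\sim (\nu(r)/z)^k f(z)$ outside an exceptional set. Comparing with the Puiseux expansions of the algebraic function $Y(X)$ defined by $F(X,Y)=0$ as $X\to\infty$, we obtain $Y\sim cX$ along the relevant branch. This gives $\nu(r)\sim c^{1/k}r$, so $f$ has order $1$ and mean type. It also constrains the Newton polygon of $F$: the highest homogeneous part must contain the factor $Y-cX$, and the other branches at infinity are restricted.

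The plan is that Proposition~\ref{prop:necessary} and its corollaries then reduce the problem to the equation \eqref{eq:yanchenko}. We expect this to be an equation of the form
\[
f^{(k)} = a f + b + \text{(an algebraic correction of lower growth)},
\]
that is, $P(f,f^{(k)})=0$ with $f$ of exponential type.

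\emph{Stage 2: the exponential-type normal form.} Since $f$ has exponential type, take its Borel transform $\phi$. Then $f^{(k)}$ corresponds to $w^k\phi$, and $\phi$ is holomorphic outside the conjugate indicator diagram $K$, a compact convex set. On each ray, the indicator $h_f(\theta)$ equals that of $f^{(k)}$. Combined with the algebraic relation, this forces $f^{(k)}/f$ to tend along each direction to one of finitely many limits $\lambda$ with $\lambda^k$ constrained by $F$.

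The key claim is that $K$ is a polygon whose vertices are the directional exponents $c_j$. For each vertex, $F$ must admit a branch $Y\sim c_j^kX$, and mixed products $e^{(c_i+c_j)z}$ appearing in $F(f,f^{(k)})$ must cancel. We would compare the supporting functions of $F(f,f^{(k)})\equiv 0$ monomial by monomial, using $h_{f^mg^n}\le m h_f+n h_g$ with equality in generic directions. From this we aim to show that either all exponents lie on a single line through $0$ and are integer multiples of one $c$, which makes $f$ a polynomial in $e^{cz},e^{-cz}$, i.e.\ $f=R(e^{cz})$; or the equation is linear.

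Once the exponents lie in $c\Z$, we finish by showing the Borel transform $\phi$ has only poles at $c\Z$, as opposed to essential or branch singularities. The relation $F(f,f^{(k)})=0$ gives a convolution identity for $\phi$ that propagates singularity type. This shows $f$ is a finite exponential sum with rational structure, and hence of the form $R(e^{cz})$.

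\emph{Main obstacle.} The hardest step is the middle one: excluding non-collinear or incommensurable exponents, and non-pole singularities of $\phi$ at the vertices of $K$, without assuming the highest homogeneous part of $F$ splits into distinct linear factors as in \cite{Yan22}. Repeated factors permit cancellations among leading terms. We would handle these by passing to the next term of the Puiseux expansion along each branch, and by applying the argument to $f-\text{(local exponential approximant)}$ inductively, peeling off vertices of $K$ one at a time.
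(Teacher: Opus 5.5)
\textbf{There is a genuine gap, and it sits in Stage~2 at exactly the step you call the main obstacle.} Your Stage~1 follows the paper's route: Wiman--Valiron gives exponential type, then the necessary conditions apply. You should split off the case $|V|=2$, which gives $R(e^{cz})$ at once. The tools you propose for Stage~2 cannot close the gap.

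\textbf{Why monomial-by-monomial comparison fails.} Any argument must use the nonlinearity essentially: for $k\ge3$, $e^z+e^{\omega z}$ with $\omega=e^{2\pi i/k}$ solves $y^{(k)}=y$ with non-collinear exponents. Comparing indicators monomial by monomial in $F(f,f^{(k)})\equiv0$ cannot detect this, for two reasons.
\begin{itemize}
\item After the reduction the top homogeneous part is $(Y-cX)^n$, whose leading contributions cancel.
\item Equality $h_{f^mg^n}=mh_f+nh_g$ in generic directions presupposes completely regular growth, which is not known a priori.
\end{itemize}
Likewise, the indicators of $f$ and $f^{(k)}$ need not agree on every ray: take $f=z^{k-1}+e^z$ and $\theta=\pi$.

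\textbf{The missing idea.} Keep the structure the reduction actually produces: $f=P(h)$ and $f^{(k)}-cf=S(h)$ with $h$ \emph{entire}, $\deg P=n\ge2$ and $\deg S\le n-1$. Then exploit the fact that $f$ has a single-valued $n$-th root where it is large. In the paper this works as follows.
\begin{itemize}
\item After normalizing $c=1$, one gets $|f^{(k)}-f|\le C(1+|f|^{\rho})$ with $\rho=(n-1)/n$.
\item The identity $\B(Lu)=(\xi^k-1)\B u-\sum_j u^{(j)}(0)\xi^{k-1-j}$ then gives $f=\sum_{\lambda\in E}c_\lambda e^{\lambda z}+v$ with $E\subset\mu_k$ and $K_v\subset\rho K$, where $K=\conv(K_f\cup\{0\})$.
\item If $E$ is not inside an antipodal pair, take an exposed edge $[\lambda,\mu]$ of $\conv(E\cup\{0\})$. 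By Rouch\'e, $f$ has exactly one zero near each far-out zero of $c_\lambda e^{\lambda z}+c_\mu e^{\mu z}$.
\item But $|h|\to\infty$ on the surrounding circles, so that count equals the number of zeros of $h^n$, a multiple of $n\ge2$. This is a contradiction.
\end{itemize}

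\textbf{Your final step is also not a workable mechanism.} A convolution identity for the Borel transform of a nonlinear relation does not certify that the singularities at the vertices of $K$ are poles. Peeling off vertices one at a time does not by itself give an exact identity. The paper avoids analysing singularities altogether:
\begin{itemize}
\item Once $E\subset\{1,-1\}$, one has the coherent asymptotic $h(z)=\gamma e^{z/n}(1+o(1))$ on a right half-strip.
\item Set $T=2\pi i n$ and $u(z)=f(z+T)-f(z)$. This kills both exponentials, so $K_u\subset[-\rho,\rho]\subset\D$.
\item Subtracting translated copies of the equation gives $Lu=a_Tu$. Here $a_T$ is the ratio of the divided differences of $S$ and $P$ at $(h(z+T),h(z))$, and $|a_T(x)|\le C'e^{-x/n}$ for large real $x$.
\item Since $K_{Lu}=K_u$ whenever $u\ne0$ and $K_u\subset\D$, the indicators of $u$ and $Lu$ at $\theta=0$ must coincide. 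But $Lu=a_Tu$ lowers that indicator by at least $1/n$.
\item Hence $u\equiv0$, so $f$ has period $2\pi i n$. Finite type then makes $f$ a Laurent polynomial in $e^{z/n}$.
\end{itemize}
Without the zero count based on $f=P(h)$ and this translation argument, your Stage~2 is a program rather than a proof.
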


\medskip
\noindent\textbf{Organization of the paper.}
In Section~\ref{sec:reduction} we establish necessary conditions on the
polynomial in the differential equation and reduce the remaining entire
case to polynomial compositions. Section~\ref{sec:borel} develops the
Laplace--Borel transform tools and separates the characteristic
exponentials from a remainder with a smaller conjugate indicator diagram.
In Section~\ref{sec:rigidity} we prove the polynomial-composition rigidity
theorem using Rouch\'e's theorem and a translation argument.
Section~\ref{sec:classification} completes the classification of
meromorphic solutions and describes the exceptional linear family and
its intersection with $\W$.

\medskip
\noindent\textbf{Acknowledgments and AI tools disclosure.}
We used ChatGPT 6.1 for English-language editing and proofreading, as well as to explore possible approaches to parts of the proofs. The principal ideas underlying the results were suggested by ChatGPT 6.1.
The authors developed, carefully verified, and rewrote all the mathematical arguments, simplifying most of the proofs in the process.

We also learned that D.~Marques \cite{Mar26} had independently
obtained the same result.

\section{\texorpdfstring
  {Necessary conditions on the polynomial $F$ for the existence of a
   meromorphic solution of \eqref{eq:bb} and reduction of the problem}
  {Necessary conditions and reduction}}
\label{sec:reduction}

To state the result we need some preliminaries. We will always assume
that the polynomial $F$ in \eqref{eq:bb} is irreducible, which does not restrict generality.

Let $\Gamma$ be the normalization of the projective closure of
$F(X,Y)=0$, and let $p,q:\Gamma\to\Chat$ be the meromorphic
coordinate functions, so that $F(p,q)=0$.

At a pole $s$ of $p$, choose a local parameter $\zeta$ such that
$p=\zeta^{-r_s}$, where $r_s\geq1$. If $q\not\equiv0$, write
\[
 q(\zeta)=a_s\zeta^{-t_s}(1+O(\zeta)),
 \qquad a_s\neq0,\quad t_s\in\Z.
\]
The corresponding Puiseux expansion has leading term
$a_s p^{\kappa(s)}$, where
\[
 \kappa(s)=\frac{t_s}{r_s}.
\]
We call $\kappa(s)$ the exponent at $s$. Different determinations
of a Puiseux expansion belonging to the same local branch represent
the same point of $\Gamma$.

A meromorphic solution $y$ of \eqref{eq:bb} induces a holomorphic map $\phi:\C\to\Gamma$
such that $y=p\circ\phi$ and $y^{(k)}=q\circ\phi$.

Picard's theorem on the uniformization says that the existence of a non-constant
holomorphic map $\phi:\C\to\Gamma$ implies that the genus $g$ of $\Gamma$ is 0 or 1.
Picard's theorem on the omitted values implies that the non-constant map $\phi$
can omit at most two points in $\Gamma$, and omitted points can occur only when
$g=0$. First we prove

\begin{prop}\label{prop:entire-type}
Every entire solution of \eqref{eq:bb} is of exponential type.
\end{prop}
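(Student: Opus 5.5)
The plan is to use classical Wiman--Valiron theory for the entire function $y$ itself; no generalization of it is needed. If $y$ is a polynomial there is nothing to prove, so assume $y$ is transcendental. Let $M(r)=\max_{|z|=r}|y(z)|$ and let $\nu(r)$ be the central index of the Taylor series of $y$. By Wiman--Valiron theory there is a set $E\subset[1,\infty)$ of finite logarithmic measure with the following property. For $r\notin E$, and $z_r$ a point with $|z_r|=r$ and $|y(z_r)|=M(r)$,
\[
 y^{(k)}(z_r)=\Bigl(\frac{\nu(r)}{z_r}\Bigr)^k y(z_r)\,(1+o(1)),\qquad r\to\infty,\ r\notin E.
\]
We also have the standard bound $\nu(r)\le(\log M(r))^{2}$ for $r\notin E$, after enlarging $E$ while keeping its logarithmic measure finite.

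Next we bring in the algebraic relation. Since $F(y(z_r),y^{(k)}(z_r))=0$ and $y(z_r)\to\infty$, the point $\phi(z_r)\in\Gamma$ approaches the finite set of poles $s$ of $p$. Near each such pole the Puiseux expansion gives $q=a_sp^{\kappa(s)}(1+o(1))$. With $\kappa^*=\max_s\kappa(s)$ and some constant $C$, this yields
\[
 |y^{(k)}(z_r)|\le C\,M(r)^{\kappa^*}
\]
for large $r$. If $q$ vanishes identically on a branch, the same bound holds trivially. Combining this with the Wiman--Valiron asymptotics gives, for $r\notin E$ large,
\[
 \Bigl(\frac{\nu(r)}{r}\Bigr)^k\le 2C\,M(r)^{\kappa^*-1}.
\]

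We now split into two cases. If $\kappa^*>1$, the displayed inequality alone is not a contradiction. Combined with $\nu(r)\ge1$, however, it says nothing useful, so this route is the wrong one for that case; see the last paragraph. If $\kappa^*\le1$, then $\nu(r)\le C'r$ for $r\notin E$. For an arbitrary large $r$, finiteness of the logarithmic measure of $E$ supplies $r'\in[r,2r]\setminus E$. Since $\nu$ is nondecreasing, $\nu(r)\le\nu(r')\le 2C'r$. Then
\[
 \log M(r)\le\log|a_0|+\int_0^{2r}\frac{\nu(t)}{t}\,dt+O(1)=O(r),
\]
using the standard estimate $\log M(r)\le \log\mu(2r)+\log 2$ together with $\log\mu(r)=\int\nu(t)\,dt/t+O(1)$. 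This is exactly exponential type.

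The main obstacle is to rule out, or else handle, branches with $\kappa(s)>1$. My first approach is to apply Wiman--Valiron in the reverse direction, namely to use a \emph{lower} bound. At $z_r$ the actual branch $s$ reached satisfies $|y^{(k)}(z_r)|\asymp M(r)^{\kappa(s)}$. Hence $(\nu(r)/r)^k\asymp M(r)^{\kappa(s)-1}$. If $\kappa(s)>1$, this forces $\nu(r)\ge r\,M(r)^{\varepsilon}$ with $\varepsilon>0$, which contradicts $\nu(r)\le(\log M(r))^2$ once $M(r)$ is large. Some branch is attained for a sequence $r\to\infty$ outside $E$, so along that sequence every attained branch must have $\kappa(s)\le1$. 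This gives $\nu(r)\le C'r$ on the complement of $E$. The monotonicity argument above then carries over verbatim, which completes the proof.
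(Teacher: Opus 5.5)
Your proof is correct and follows the paper's route: Wiman--Valiron at maximum-modulus points $z_r$, identification of the Puiseux branch actually reached by $(y(z_r),y^{(k)}(z_r))$, exclusion of branches with $\kappa(s)>1$ via $\nu(r)\le(\log M(r))^2$, then monotonicity of $\nu$ on $[r,2r]$ and integration of $\nu(t)/t$. The paper additionally uses $\nu(r)\ge1$ to exclude $\kappa(s)<1$, which, as you implicitly note, is not needed for $\nu(r)=O(r)$. You should also drop the $\kappa^*$ detour, and state the branch exclusion for all large $r\notin E$ (uniformly over the finitely many branches) rather than ``along that sequence''.
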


\begin{proof}
Polynomial solutions are of exponential type, so suppose that $y$
is transcendental. Write
\begin{equation}\label{eq:pdf-polynomial}
 F_m(y)(y^{(k)})^m+\cdots+F_0(y)=0,
\end{equation}
where the $F_j$ are polynomials.

Let $M(r)=\max_{|z|=r}|y(z)|$, and let $\mu(r)$ and $\nu(r)$
be the maximum term and central index of $y$. Classical
Wiman--Valiron theory \cite[Chapter.~I]{Wit55} gives an exceptional
set $E$ of finite logarithmic measure and points $z_r$, with
$|z_r|=r$ and $|y(z_r)|=M(r)$, such that
\begin{equation}\label{eq:wv}
 y^{(j)}(z_r)=(1+o(1))
 \left(\frac{\nu(r)}{z_r}\right)^j y(z_r),
 \qquad 1\leq j\leq k,
\end{equation}
as $r\to\infty$ outside $E$. Enlarging $E$ if necessary, we also have
\begin{equation}\label{eq:pdf-central-index}
 \nu(r)\leq(\log M(r))^2,\qquad r\notin E.
\end{equation}
Indeed, $d\log\mu(r)/d\log r=\nu(r)$ almost everywhere, so the set
where $\nu(r)>(\log\mu(r))^2$ has finite logarithmic measure.

Since $y$ is transcendental, $M(r)/r^A\to\infty$ for every $A>0$.
Together with \eqref{eq:pdf-central-index}, this implies that, for
every $\varepsilon>0$ and all sufficiently large $r\notin E$,
\[
 M(r)^{-\varepsilon}
 \leq\left(\frac{\nu(r)}r\right)^k
 \leq M(r)^\varepsilon.
\]

The algebraic relation \eqref{eq:pdf-polynomial} has finitely many
Puiseux branches above $X=\infty$. Their leading terms are
$a_sX^{\kappa(s)}$, with $a_s\neq0$. At $X=y(z_r)$, comparison with
\eqref{eq:wv} gives, for the branch containing
$(y(z_r),y^{(k)}(z_r))$,
\[
 \left(\frac{\nu(r)}r\right)^k
   =(|a_s|+o(1))M(r)^{\kappa(s)-1}.
\]
The preceding bounds and the finiteness of the set of branches show
that only branches with $\kappa(s)=1$ can occur for sufficiently
large $r\notin E$. Hence $\nu(r)=O(r)$ outside $E$.

For every sufficiently large $r$, the interval $[r,2r]$ contains
a point outside $E$. Monotonicity of $\nu$ therefore gives
$\nu(r)=O(r)$ for all large $r$. Consequently,
\[
 \log\mu(r)=\log\mu(r_0)+\int_{r_0}^r\frac{\nu(t)}t\,dt=O(r).
\]
Finally, $M(r)\leq2\mu(2r)$, by comparison of the Taylor series
with a geometric series. Thus $\log M(r)=O(r)$.
\end{proof}
We now return to the algebraic curve $\Gamma$ and describe how
the poles of its coordinate functions are constrained by the
existence of a transcendental meromorphic solution.
The following proposition will be used to reduce the entire
case to polynomial compositions. Its proof is given after
Lemma~\ref{lem:finite-fiber}.
\begin{prop}\label{prop:necessary}
Suppose that \eqref{eq:bb} has a transcendental meromorphic
solution $y$ and that $\Gamma$ has genus zero. Then $p$ and $q$
have the same set $V\subset\Gamma$ of poles.
For every $s\in V$, either $\kappa(s)=1$ or
$\kappa(s)=1+k/n$ for some $n\in\N$.

If $y$ has a pole of order $n$, then $\kappa(s)=1+k/n$ at its
image $s\in V$. Conversely, if $\kappa(s)=1+k/n$ at some
$s\in V$, then $y$ has infinitely many poles of order $n$.

If $s\in V$ is omitted by $\phi$, then $\kappa(s)=1$.
\end{prop}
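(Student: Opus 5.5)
Since the proof is given after Lemma~\ref{lem:finite-fiber}, whose statement we have not seen, I will use it only in a form I expect it to have: a non-constant holomorphic map $\phi:\C\to\Gamma$ with $\Gamma$ of genus zero takes every value in $\Gamma$ except at most two points, and the points of $V$ not omitted are hit infinitely often. The plan is to analyse the map $\phi$ near a point $s\in V$. Two situations occur: $s=\phi(z_0)$ for some $z_0\in\C$, or $s$ is omitted and is approached only as $z\to\infty$.

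\emph{Step 1: $p$ and $q$ have the same poles, and exponents at attained poles.} Suppose $s=\phi(z_0)$. If $p(s)=\infty$, then $y=p\circ\phi$ has a pole at $z_0$, say of order $n$. Then $y^{(k)}$ has a pole of order $n+k$ there, so $q(s)=\infty$. In the local parameter, $\phi$ has some local degree $d$ at $z_0$, so $r_s d=n$ and $t_s d=n+k$. Hence $\kappa(s)=t_s/r_s=(n+k)/n=1+k/n$. Conversely, if $q(s)=\infty$ but $p(s)\neq\infty$, then $y$ would be holomorphic at $z_0$ while $y^{(k)}$ has a pole, which is impossible. So at attained points the pole sets of $p$ and $q$ agree, and the exponent is $1+k/n$. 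The same computation shows that the pole order $n=r_s d$ is determined by $s$ once $d$ is fixed. I would argue that $d=1$, or at least that the ratio $t_s/r_s$ fixes $n$ through $\kappa(s)=1+k/n$, since $n$ is read off from $\kappa(s)$. Because $\phi$ hits $s$ infinitely often (Picard plus the finite fibre lemma), every such $s$ gives infinitely many poles of $y$ of order $n$. This proves both the direct and the converse statements.

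\emph{Step 2: omitted poles.} Suppose $s$ is a pole of $p$ or of $q$ and is omitted by $\phi$. Near $z=\infty$, $\phi(z)$ approaches $s$ along some curves. I would run the Wiman--Valiron-type argument of Proposition~\ref{prop:entire-type} along those curves. Where $\phi$ is near $s$, $(y,y^{(k)})$ follows the Puiseux branch at $s$. If $p(s)=\infty$, then $y$ is large there. A growth comparison of the kind used in Proposition~\ref{prop:entire-type}, with $y^{(k)}\sim a_s y^{\kappa}$ on a set where $|y|$ is large, should force $\kappa(s)=1$. In the meromorphic case I would apply this to $1/y$-type local versions, or use Nevanlinna's logarithmic derivative lemma: $m(r,y^{(k)}/y)=o(T(r,y))$ outside an exceptional set. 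If $\kappa>1$, then on the set where $\phi$ is near $s$ we would have $\log|y^{(k)}/y|\approx(\kappa-1)\log|y|$. This contributes a positive proportion of $m(r,y)$, and $m(r,y)$ is comparable to $T(r,y)$ because $s$ is omitted and so only this branch produces large values of $y$ among the non-polar contributions. That is a contradiction. The case $\kappa<1$ is handled symmetrically by applying the lemma to $y/y^{(k)}$, that is, with $y^{(k)}$ in the role of the primary function. If $p(s)\neq\infty$ but $q(s)=\infty$, then $y$ is bounded near $s$ while $y^{(k)}\to\infty$. The logarithmic derivative lemma, applied to $y^{(k)}/(y-c)$ with $c=p(s)$, gives the contradiction. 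So the pole sets coincide everywhere, and $\kappa(s)=1$ at omitted points.

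\emph{Main obstacle.} The main obstacle is Step 2: making the proximity-function comparison rigorous. Specifically, we must show that the set where $\phi$ is near an omitted $s$ carries a non-negligible share of $T(r,y)$. For this I would use the fact that the Nevanlinna characteristic of $\phi$ with respect to the divisor $s$ on the genus-zero curve is mostly proximity, by the first main theorem, since the counting term vanishes.
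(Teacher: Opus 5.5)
Your Step 1 matches the paper's computation. At any preimage of an attained pole $s$, the pole order of $y$ is $n=k/(\kappa(s)-1)$, whatever the local degree $d$. The alternative $d=1$ is false in general: for $y=\wp$, $k=2$, one has $d=2$.

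\emph{First gap: attained poles being hit infinitely often.} You use, as a black box, that every attained point of $V$ is attained infinitely often. This does not follow from Picard's theorem: $z\ee^z$ takes the value $0$ exactly once. It is also essentially the converse assertion you are asked to prove, so it cannot be postulated as the content of Lemma~\ref{lem:finite-fiber}. The lemma actually says: if $s$ is a pole of $p$ or $q$ and $\phi^{-1}(s)$ is \emph{finite}, then $p$ and $q$ have poles of equal order at $s$, i.e.\ $\kappa(s)=1$. Combined with Step 1 ($\kappa(s)=1+k/n\neq1$ at attained poles), this gives infinite fibres at attained poles, and $\kappa(s)=1$ at omitted ones. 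So the analytic argument must cover finite fibres, not only empty ones. Your Step 2 treats only the empty fibre. In your Nevanlinna setting the extension would be cheap, since the counting term is $O(\log r)$, but as written the infinitely-many-poles statement is unproved.

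\emph{Second gap: the case $b<a$ in Step 2.} This one is more serious. Let $t$ be a coordinate whose only pole is $s$, and put $H_0=t\circ\phi$. Then $y=R(H_0)$ and $y^{(k)}=T(H_0)$, with $R(w)-b_0\sim Aw^a$ and $T(w)\sim Bw^b$ at $\infty$.

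For $b>a$ your argument works, once you fix one claim. The statement about $m(r,y)$ is wrong, because other poles of $p$ also contribute to it. Replace it by $m(r,H_0)=T(r,H_0)+O(\log r)\asymp T(r,y)$, as your last paragraph suggests.

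For $b<a$ there is no symmetric argument. This case includes $\kappa(s)<1$, and in particular $p(s)=\infty$, $q(s)\neq\infty$, which you need for the equality of the pole sets.
\begin{itemize}
\item The logarithmic derivative lemma only bounds $m(r,g^{(j)}/g)$ from above.
\item $y$ is an antiderivative of $y^{(k)}$, not a derivative, so making $y^{(k)}$ the primary function does not help.
\item $m(r,y/y^{(k)})$ need not be small: for $y=\ee^z-1$, $k=1$, it is $\sim r/\pi\sim T(r,y)$.
\end{itemize}

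What is needed is a two-sided estimate, which the paper gets from Wiman--Valiron. Remove the finitely many principal parts of $H_0$ to get a transcendental entire $H$. At its maximum-modulus points, $H_0^{(j)}(z_r)\sim(\nu(r)/z_r)^jH_0(z_r)$. Fa\`a di Bruno then gives $y^{(k)}(z_r)=(Aa^k+o(1))H_0(z_r)^a(\nu(r)/z_r)^k$. Comparing with $T(H_0(z_r))\sim BH_0(z_r)^b$ yields $(\nu(r)/r)^k\asymp M(r)^{b-a}$. This is incompatible with $M(r)^{-\varepsilon}\le(\nu(r)/r)^k\le M(r)^{\varepsilon}$ unless $a=b$. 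It settles both signs of $b-a$ and all finite fibres at once.
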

Combining these conditions with Picard's theorems gives the
following corollary for entire solutions.
\begin{cor}\label{cor:entire-poles}
If \eqref{eq:bb} has a transcendental entire solution, then
$\Gamma$ has genus zero, $|V|\in\{1,2\}$, and
$\kappa(s)=1$ for every $s\in V$.
\end{cor}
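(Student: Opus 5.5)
We combine Picard's two theorems with Proposition~\ref{prop:necessary}. Let $y$ be a transcendental entire solution and let $\phi:\C\to\Gamma$ be the induced holomorphic map with $y=p\circ\phi$ and $y^{(k)}=q\circ\phi$.

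\textbf{Step 1: genus zero.} The map $\phi$ is non-constant, since $y=p\circ\phi$ is. By Picard's uniformization theorem, $\Gamma$ has genus $0$ or $1$. Suppose the genus were $1$. Then $\phi$ omits no point of $\Gamma$, so $\phi$ attains a pole $s$ of the non-constant meromorphic function $p$. At a preimage of $s$, the function $y=p\circ\phi$ has a pole, contradicting that $y$ is entire. (The same argument applies in any genus: every pole of $p$ must be omitted by $\phi$.) Hence $\Gamma$ has genus zero, and Proposition~\ref{prop:necessary} applies. It gives a common set $V$ of poles of $p$ and $q$.

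\textbf{Step 2: $|V|\in\{1,2\}$.} Since $y$ is entire, every point of $V$ is omitted by $\phi$, by the argument in Step 1. Picard's omitted-value theorem says a non-constant holomorphic map $\C\to\Gamma\cong\Chat$ omits at most two points. Hence $|V|\le2$. On the other hand, $p$ is a non-constant meromorphic function on the compact curve $\Gamma$, so it has at least one pole, and $|V|\ge1$.

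\textbf{Step 3: $\kappa(s)=1$.} Every $s\in V$ is omitted by $\phi$. The last assertion of Proposition~\ref{prop:necessary} then gives $\kappa(s)=1$. Alternatively, if $\kappa(s)=1+k/n$ for some $s\in V$, the converse part of that proposition would give $y$ infinitely many poles, which is impossible for an entire function.

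\textbf{Main obstacle.} Granting Proposition~\ref{prop:necessary}, the argument is immediate. The only point needing care is to observe that $p$ has at least one pole, so that $V$ is nonempty, and that all of $V$ lies in the complement of $\phi(\C)$. This follows because the poles of $y$ are exactly the preimages under $\phi$ of the poles of $p$.
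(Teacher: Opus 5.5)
Your proposal is correct and follows essentially the paper's argument: every pole of $p$ is omitted by $\phi$, so Picard's theorems force genus zero and at most two poles, and Proposition~\ref{prop:necessary} (common pole set $V$, and $\kappa(s)=1$ at omitted points) finishes the proof. Your explicit remark that $V\neq\varnothing$, because the nonconstant function $p$ on the compact curve $\Gamma$ has a pole, is a small detail the paper's proof uses only implicitly.
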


\begin{proof}
The nonconstant meromorphic function $p$ has a pole, and every
pole of $p$ is omitted by $\phi$. Picard's theorems therefore give
genus zero and at most two such poles. Now apply
Proposition~\ref{prop:necessary}.
\end{proof}

\begin{cor}\label{cor:two-poles}
If \eqref{eq:bb} has a transcendental entire solution $y$ and $|V|=2$,
then $y(z)=R(e^{cz})$, where $R$ is rational and $c\in\C^*$.
\end{cor}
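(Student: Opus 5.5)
By Corollary~\ref{cor:entire-poles}, $\Gamma$ has genus zero. Fix a uniformization $\Gamma\cong\Chat$ with parameter $w$, and place the two points of $V$ at $w=0$ and $w=\infty$. Then $p$ and $q$ are rational functions of $w$ whose poles lie only at $0$ and $\infty$, so they are Laurent polynomials in $w$. The map $\phi:\C\to\Gamma$ omits both points of $V$, because $y$ is entire and every pole of $p$ is omitted. So $\phi$ is a zero-free entire function, and we can write $\phi=e^{h}$ with $h$ entire. This gives $y=p(e^{h})$ and $y^{(k)}=q(e^{h})$, where $p,q\in\C[w,w^{-1}]$. It remains to show that $h$ is linear, $h(z)=cz+d$; then $y=R(e^{cz})$ with $R(w)=p(e^{d}w)$.

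The first step is to show that $h$ is a polynomial. By Proposition~\ref{prop:entire-type}, $y$ has exponential type, so $T(r,y)=O(r)$. Since $p$ is a nonconstant rational function, $T(r,y)=\deg p\cdot T(r,e^h)+O(1)$, hence $T(r,e^h)=O(r)$. An exponential $e^{h}$ of finite order must have $h$ a polynomial, and order at most $1$ forces $\deg h\le 1$. Because $y$ is transcendental, $h$ is nonconstant, so $h(z)=cz+d$ with $c\neq0$. This already gives the conclusion.

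Most of the work is in the clean uniformization step, and I expect that step to be the main obstacle to state carefully. Writing $p(w)=\sum_{j=-m}^{n}\alpha_j w^j$, the Laurent-polynomial form holds only when both points of $V$ are genuine poles of $p$ at $w=0,\infty$ and $p$ has no other poles. This follows from the definition of $V$ as the common pole set of $p$ and $q$ (Proposition~\ref{prop:necessary}), together with $|V|=2$.

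The other point to check is that $\phi$ really omits both points, which holds because $y=p\circ\phi$ is finite everywhere. With these two facts in place, the growth comparison is routine Nevanlinna theory (the Valiron--Mohon'ko identity $T(r,R\circ f)=\deg R\, T(r,f)+O(1)$ together with Pólya's theorem, or simply Hadamard's theorem for $e^h$). In particular, the condition $\kappa(s)=1$ from Corollary~\ref{cor:entire-poles} is not needed for this corollary; it becomes relevant only in the one-pole case.
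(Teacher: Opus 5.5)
Your proposal is correct and follows the paper's route: send the two points of $V$ to $0$ and $\infty$, write $y=R(e^{h})$ with $h$ entire, and use Proposition~\ref{prop:entire-type} to force $h$ to be affine. The only difference is how the growth bound passes from $y$ to $h$. You use the Valiron--Mohon'ko identity $T(r,R\circ e^h)=\deg R\cdot T(r,e^h)+O(1)$ and Hadamard's theorem. The paper instead uses the elementary inequality $\max\{|w|,|w|^{-1}\}\leq A(1+|R(w)|)^B$ to get $|\Rea h(z)|\leq A_1+B_1|z|$, and then applies Borel--Carath\'eodory with Cauchy's estimates.
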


\begin{proof}
Choose a coordinate on $\Gamma$ that sends the two points of $V$
to $0$ and $\infty$. Then $y=R(e^g)$, where $g$ is entire and
$R$ is rational with poles at both $0$ and $\infty$ and no others.
For some constants $A,B>0$,
\[
 \max\{|w|,|w|^{-1}\}\leq A(1+|R(w)|)^B,
 \qquad w\in\C^*.
\]
Proposition~\ref{prop:entire-type} now gives
$|\operatorname{Re}g(z)|\leq A_1+B_1|z|$.
The Borel--Carath\'eodory theorem and Cauchy's estimates imply
that $g$ is affine. Its slope is nonzero because $y$ is nonconstant.
Absorbing its constant term into $R$ proves the assertion.
\end{proof}
The following corollary follows from Eremenko's necessary
conditions \cite[Theorem~1]{Ere82}. The reduction was stated
explicitly in \cite{Ere24}.
\begin{cor}\label{cor:yanchenko}
Suppose that \eqref{eq:bb} has a transcendental entire solution
and $|V|=1$. Then either \eqref{eq:bb} is linear or, after
multiplication of $F$ by a nonzero constant, it has the form
\begin{equation}\label{eq:yanchenko}
 (y^{(k)}-cy)^n=P_n(y,y^{(k)}),
 \qquad \deg P_n\leq n-1,
\end{equation}
where $n\geq2$ and $c\in\C^*$.
\end{cor}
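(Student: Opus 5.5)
We work directly with the structure of $\Gamma$ given by Corollary~\ref{cor:entire-poles}. Since $|V|=1$ and $\Gamma$ has genus zero, choose a coordinate $w$ on $\Gamma\cong\Chat$ that sends the unique point $s\in V$ to $\infty$. Then $p$ and $q$ are meromorphic on $\Chat$ with their only pole at $\infty$, so they are polynomials: $p=A(w)$ and $q=B(w)$. The solution has the form $y=A(h)$ and $y^{(k)}=B(h)$, where $h=w\circ\phi$ is entire. Because $\Gamma$ is the normalization of $F=0$ and $F$ is irreducible, the map $w\mapsto(A(w),B(w))$ is birational onto the curve. Hence $\C(A,B)=\C(w)$, and $F$ is, up to a constant, the resultant-type relation $\operatorname{Res}_w\bigl(A(w)-X,\,B(w)-Y\bigr)$. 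In particular, $F$ has degree $\deg B$ in $X$ and degree $\deg A$ in $Y$.

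Next we use $\kappa(s)=1$. At $w=\infty$ the local parameter is $\zeta=1/w$ up to the ramification of $A$. We have $r_s=\deg A$ and $t_s=\deg B$, so $\kappa(s)=1$ means $\deg A=\deg B=:n$. Write $A=\alpha w^n+\dots$ and $B=\beta w^n+\dots$, and set $c=\beta/\alpha\neq0$; here $B\not\equiv0$, since otherwise $y$ would be a polynomial. If $n=1$, then $B=cA+d$ is affine in $A$, and \eqref{eq:bb} reduces to the linear equation $y^{(k)}=cy+d$. This is the linear alternative.

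Now suppose $n\ge2$. The key step is to identify the top homogeneous part of $F$. Over $X=\infty$ the curve has $n$ Puiseux branches, all of them $Y=cX+o(X)$, because $B=cA+(\text{lower order in }w)$ and $A^{1/n}\sim \alpha^{1/n}w$. Consequently $F(X,Y)=\prod_{j=1}^{n}\bigl(Y-Y_j(X)\bigr)$ with $Y_j=cX+O(X^{1-1/n})$. The leading form of $F$ is therefore $(Y-cX)^n$: it has total degree $n$, since $F$ has degree $n$ in each variable and the product shows total degree $n$. Every remaining monomial has total degree at most $n-1$, because $\prod(Y-Y_j)-(Y-cX)^n$, viewed as a polynomial in $X,Y$, has weighted growth $o(|X|^n+|Y|^n)$ and hence degree below $n$. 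After normalizing the constant, this gives \eqref{eq:yanchenko} with $\deg P_n\le n-1$.

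The main obstacle is justifying the total-degree claim rigorously, that is, that no monomial of degree $n$ other than those in $(Y-cX)^n$ survives. Our first approach is to write $F(X,Y)=\operatorname{const}\cdot\prod_{A(w_j)=X}(Y-B(w_j))$, which is valid because the parametrization is birational. We then substitute $B=cA+D$ with $\deg D\le n-1$, so that $Y-B(w_j)=(Y-cX)-D(w_j)$. Expanding the product gives $\sum_m (Y-cX)^{n-m}e_m\bigl(D(w_1),\dots,D(w_n)\bigr)$. Each $e_m$ is a symmetric function of the roots of $A(w)=X$, so it is a polynomial in $X$ of degree at most $m(n-1)/n<m$. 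Every term with $m\ge1$ therefore has total degree at most $n-1$, which settles the claim.
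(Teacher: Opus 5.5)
Your proof is correct, but it takes a different route from the paper's. The paper argues projectively. Since $p$ and $q$ have equal pole orders at the single point $s$ and $q/p\to c\neq0$ there, the projective closure of $F=0$ meets the line at infinity only at $[1:c:0]$. The zeros of the top homogeneous part $F_d$ on $\mathbb{P}^1$ are exactly the points at infinity of the curve, so $F_d=a(Y-cX)^d$, and the rest is bookkeeping. That argument needs no parametrization, no birationality and no resultant.

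You instead use the genus-zero polynomial parametrization $p=A(w)$, $q=B(w)$ and the norm formula $F=\mathrm{const}\cdot\prod_{A(w_j)=X}(Y-B(w_j))$, which is where birationality of the normalization enters. You then do a symmetric-function degree count after writing $B=cA+D$ with $\deg D\leq n-1$. This is more computational, but it gives more. It shows that $F$ has degree exactly $n=\deg A=\deg B$ in each variable and in total, so the exponent in \eqref{eq:yanchenko} is identified with the common degree of the polynomial parametrization used later in Section~2. It also gives $P_n$ explicitly through the functions $e_m(D(w_1),\dots,D(w_n))$.

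Two small remarks. Your Puiseux paragraph is not a proof on its own: degree $n$ in each variable does not give total degree $n$, and the ``weighted growth'' claim is vague. Your final paragraph supersedes it, so you should rely on that paragraph alone. The bound $\deg_X e_m\leq m(n-1)/n$ also deserves its one-line justification. For example, $|w_j|\sim|X/\alpha|^{1/n}$ gives $e_m=O(|X|^{m(n-1)/n})$. Alternatively, count weighted degree in the elementary symmetric functions of the $w_j$, of which only the last one depends on $X$, and linearly.
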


\begin{proof}
Let $s$ be the unique point of $V$. By
Proposition~\ref{prop:necessary}, $p$ and $q$ have equal pole orders
at $s$, and $q/p\to c\neq0$ there. The projective closure of
$F(X,Y)=0$ therefore has exactly one point at infinity, namely
$[1:c:0]$. If $F_d$ is the highest homogeneous part of $F$,
its only zero on the projective line is $[1:c]$. Hence
\[
 F_d(X,Y)=a(Y-cX)^d,\qquad a\neq0.
\]
If $d=1$, the equation is linear. Otherwise, divide by $a$ and
move the terms of lower degree to the other side.
\end{proof}

Thus the only case which remains to consider is entire solutions of equation
\eqref{eq:yanchenko}.

\begin{lem}\label{lem:finite-fiber}
Suppose that $y$ is transcendental meromorphic and $\Gamma$ has
genus zero. If $s\in\Gamma$ is a pole of at least one of $p,q$
and $\phi^{-1}(s)$ is finite, then $p$ and $q$ both have poles
at $s$, of the same order.
\end{lem}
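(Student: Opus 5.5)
Since $\Gamma$ has genus zero, we identify it with $\Chat$ and choose the coordinate so that $s=\infty$. Then $\phi$ is a meromorphic function in the plane, $p$ and $q$ are rational, and $y=p\circ\phi$, $y^{(k)}=q\circ\phi$. Since $y$ is transcendental, $\phi$ is transcendental. Because $\phi^{-1}(\infty)$ is finite, there is a polynomial $Q$ such that $g=Q\phi$ is a transcendental entire function. Near $\zeta=\infty$ we write
\[
 p(\zeta)=p_0+a\zeta^{r}(1+O(1/\zeta)),\qquad q(\zeta)=q_0+b\zeta^{t}(1+O(1/\zeta)),
\]
with $a,b\neq0$ and $r,t\in\Z\setminus\{0\}$. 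Here $p_0=0$ when $r>0$, and similarly for $q_0$. A pole of $p$ (resp.\ $q$) at $s$ means $r>0$ (resp.\ $t>0$). The claim is therefore that $r>0$ or $t>0$ forces $r=t$. If $q\equiv$ const we treat it separately: then $y$ satisfies a linear equation $y^{(k)}=q_0$ and is a polynomial, which is a contradiction.

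We then apply classical Wiman--Valiron theory to $g$, exactly as in the proof of Proposition~\ref{prop:entire-type}. This gives points $z_\rho$ with $|z_\rho|=\rho$ outside a set $E$ of finite logarithmic measure, $|g(z_\rho)|=M_g(\rho)$, and the local representation $g(z)=(1+o(1))g(z_\rho)(z/z_\rho)^{\nu(\rho)}$ in a disc $|z-z_\rho|<\rho\,\nu(\rho)^{-1/2-\delta}$. Since $\phi=g/Q$ with $Q$ a polynomial, the same representation holds for $\phi$, up to a factor $z_\rho^{-\deg Q}$ absorbed into constants. In that disc $|\phi|\to\infty$, so the expansion of $p$ at $\infty$ applies:
\[
 y(z)-p_0=a\,\phi(z)^r(1+o(1)).
\]
Writing $\phi^r=\exp(r\log\phi)$ and differentiating, either by Cauchy estimates in the Wiman--Valiron disc or by the standard lemma that Wiman--Valiron asymptotics survive composition with $w\mapsto w^r(1+O(1/w))$, we aim for
\[
 y^{(k)}(z_\rho)=(1+o(1))\left(\frac{r\nu(\rho)}{z_\rho}\right)^k a\,\phi(z_\rho)^r .
\]
This is the step I expect to be the main obstacle. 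The terms $O(1/\phi)$ in the expansion of $p$ must be differentiated without losing more than a factor $(\nu/\rho)^k$, and the case $r<0$ (a zero of $y-p_0$ rather than a pole) must be handled uniformly. I would try the composition argument first: $y-p_0=h(\phi)$ with $h$ holomorphic near $\infty$. I would then apply the chain rule (Fa\`a di Bruno), using $\phi^{(j)}\approx(\nu/z)^j\phi$ and $h^{(i)}(w)\asymp|w|^{r-i}$, and check that the top term dominates because $\nu\to\infty$.

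On the other hand, $y^{(k)}(z_\rho)=q(\phi(z_\rho))=q_0+b\,\phi(z_\rho)^t(1+o(1))$. Put $M=|\phi(z_\rho)|\to\infty$. As in Proposition~\ref{prop:entire-type}, enlarging $E$ gives $M^{-\varepsilon}\le(\nu/\rho)^k\le M^{\varepsilon}$. Comparing the two expressions for $y^{(k)}(z_\rho)$ gives
\[
 |q_0+bM^{t}e^{i\theta}(1+o(1))|=M^{r\pm\varepsilon}\qquad\text{for every }\varepsilon>0 .
\]
We now split into cases. If $r>0$, the right side tends to infinity, so $t>0$, and comparing exponents gives $t=r$. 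If $r<0$ and $t>0$, the right side tends to zero while the left side tends to infinity, which is a contradiction. Since at least one of $r,t$ is positive, the only possibility is $r=t>0$. Thus $p$ and $q$ both have poles at $s$, of the same order, as claimed.
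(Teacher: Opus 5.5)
Your proposal is correct and is essentially the paper's proof. The paper likewise puts $s$ at infinity and removes the finitely many poles of $t\circ\phi$, though it subtracts principal parts rather than multiplying by a polynomial. It then uses Wiman--Valiron and Fa\`a di Bruno to get $y^{(k)}(z_r)=(Aa^k+o(1))H_0(z_r)^a(\nu(r)/z_r)^k$ and compares exponents via $M^{-\varepsilon}\le(\nu/r)^k\le M^{\varepsilon}$. Instead of subtracting $q_0$ and splitting into cases, it simply allows exponent $0$ for $T$.

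One correction to your heuristic for the step you flagged: in Fa\`a di Bruno nothing dominates because $\nu\to\infty$. All the terms are of the same order $|\phi|^r(\nu/\rho)^k$ (or smaller), and the coefficient $r^k\neq0$ appears only after summing them, as in $(w\,d/dw)^k w^r=r^k w^r$; this is exactly the paper's remark. Alternatively, Cauchy estimates on a disc of radius $\rho/\nu(\rho)$ about $z_\rho$, where $|(z/z_\rho)^{\nu}|$ stays bounded above and below, give your formula directly from the local Wiman--Valiron representation.
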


\begin{proof}
Choose a coordinate $t:\Gamma\to\Chat$ whose only pole is at $s$,
and put $H_0=t\circ\phi$. Then $H_0$ is transcendental meromorphic
with finitely many poles, and
\[
 y=R(H_0),\qquad y^{(k)}=T(H_0)
\]
for nonconstant rational functions $R,T$.
At infinity, write
\begin{equation}\label{eq:pdf-local-composition}
 R(w)-b_0=A w^a(1+O(w^{-1})),\qquad
 T(w)=B w^b(1+O(w^{-1})),
\end{equation}
where $AB\neq0$, $a\in\Z\setminus\{0\}$ and $b\in\Z$.
Here $b_0=0$ if $R$ has a pole at infinity, and otherwise
$b_0=R(\infty)$. At least one of $a,b$ is positive.

Subtract the rational principal parts of $H_0$ to obtain a
transcendental entire function $H$. Apply the Wiman--Valiron
estimates used in Proposition~\ref{prop:entire-type} to $H$.
Writing $M(r)=M(r,H)$ and $\nu(r)$ for its central index, we obtain
points $z_r$, $|z_r|=r$, outside an exceptional set of finite
logarithmic measure, such that
\[
 H_0(z_r)=(1+o(1))H(z_r),\qquad
 H_0^{(j)}(z_r)=(1+o(1))
 \left(\frac{\nu(r)}{z_r}\right)^j H_0(z_r),
 \quad 1\leq j\leq k.
\]
The rational terms that were subtracted are negligible here,
since $M(r)$ grows faster than every power of $r$.

Fa\`a di Bruno's formula, together with
\eqref{eq:pdf-local-composition}, gives
\[
 y^{(k)}(z_r)
  =(Aa^k+o(1))H_0(z_r)^a
    \left(\frac{\nu(r)}{z_r}\right)^k.
\]
The coefficient $a^k$ is the coefficient obtained by applying
$(w\,d/dw)^k$ to $w^a$; in particular, it is nonzero.
Comparison with $T(H_0(z_r))$ yields
\[
 \left(\frac{\nu(r)}r\right)^k
  =\left(\left|\frac{B}{Aa^k}\right|+o(1)\right)M(r)^{b-a}.
\]
As in the proof of Proposition~\ref{prop:entire-type}, the left-hand
side lies between $M(r)^{-\varepsilon}$ and $M(r)^\varepsilon$
for every $\varepsilon>0$ and all sufficiently large admissible $r$.
Thus $a=b$. Since at least one is positive, both are positive,
and the two pole orders are equal.
\end{proof}

\begin{proof}[Proof of Proposition~\ref{prop:necessary}]
Suppose first that $s$ is a pole of at least one of $p,q$ and
$s=\phi(z_0)$. Since $y$ and $y^{(k)}$ have the same poles,
$s$ is a pole of both coordinate functions. If $y$ has a pole
of order $n$ at $z_0$, then
\[
 y(z)\sim A(z-z_0)^{-n},\qquad
 y^{(k)}(z)\sim
 A(-1)^k n(n+1)\cdots(n+k-1)(z-z_0)^{-n-k}.
\]
Let $e$ be the local degree of $\phi$ at $z_0$, and let $r_s,t_s$
be the pole orders of $p,q$ at $s$. Then
$n=e r_s$ and $n+k=e t_s$, so $\kappa(s)=1+k/n$.

If a pole of one coordinate function were not a pole of the other,
it could not be attained by $\phi$. Lemma~\ref{lem:finite-fiber}
would then give a contradiction. Thus the pole sets coincide.

For a common pole with finite fibre, the lemma gives
$\kappa(s)=1$; the calculation above shows that this fibre must
in fact be empty. Every other common pole has infinitely many
preimages, and at each of them the pole order of $y$ equals
$k/(\kappa(s)-1)$. This proves all the assertions.
\end{proof}

It remains to consider a transcendental entire solution in the
case $|V|=1$. Choose a coordinate $t$ on $\Gamma$ whose only pole
is the point of $V$, and put $h=t\circ\phi$. Then $h$ is entire
and
\[
 y=P(h),\qquad y^{(k)}=Q(h),
\]
where $P,Q$ are polynomials of the same degree $N$.
If $N=1$, the equation is linear. Otherwise $N\geq2$.
Let $c\neq0$ be the leading coefficient of $Q$ divided by that
of $P$. Then $S_0=Q-cP$ has degree at most $N-1$.

Choose $\alpha\neq0$ with $\alpha^k c=1$, and set
$f(z)=y(\alpha z)$ and $\widetilde h(z)=h(\alpha z)$. Then
\[
 f=P(\widetilde h),\qquad
 f^{(k)}-f=\alpha^k S_0(\widetilde h).
\]
Thus Theorem~\ref{thm:pdf-main} follows from the next proposition.

\begin{prop}\label{prop:pdf-composition}
Let $h$ be entire, let $P,S\in\C[w]$, and suppose that
$n=\deg P\geq2$ and $\deg S\leq n-1$.
If $f=P(h)$ is of exponential type and
\begin{equation}\label{eq:composition}
 f^{(k)}-f=S(h),
\end{equation}
then $f\in\W$.
\end{prop}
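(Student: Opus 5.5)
The plan is to study $f$ through its Borel transform and conjugate indicator diagram, and then obtain a rigidity statement for the polynomial composition $f=P(h)$.

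\textbf{Step 1: separating the exponentials.} By hypothesis $f=P(h)$ has exponential type, and $|S(h)|\le C(1+|f|)^{(n-1)/n}$ because $\deg S\le n-1<n=\deg P$. Hence $h$ and $S(h)$ also have exponential type. Write $K_f$ and $K_S$ for the conjugate indicator diagrams of $f$ and $S(h)$. Comparing indicators gives $h_{S(h)}\le\frac{n-1}{n}h_f$, so $K_S\subset\frac{n-1}{n}K_f$.

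Apply the Laplace--Borel transform to \eqref{eq:composition}. The operator $D^k-1$ becomes multiplication by $\zeta^k-1$, so
\[
 B_f(\zeta)=\frac{B_{S(h)}(\zeta)+\text{(polynomial of degree}<k)}{\zeta^k-1}.
\]
Thus $B_f$ extends analytically outside $K_S\cup\{\omega:\omega^k=1\}$, and
\[
 f=\sum_{\omega^k=1}c_\omega e^{\omega z}+g,\qquad g=(D^k-1)^{-1}S(h),
\]
where $K_g\subset K_S$. It follows that $K_f=\conv\bigl(K_S\cup\{\omega:c_\omega\ne0\}\bigr)$. Because $K_S\subset\frac{n-1}{n}K_f$ is a strict contraction toward the origin, every extreme point of $K_f$ is some $\omega$ with $c_\omega\neq0$. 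Any extreme point in $K_S$ would have to be $0$, which forces $f$ to have exponential type zero and be handled separately (it would then be a polynomial, excluded). So $K_f$ is a polygon (or segment or point) with vertices among the $k$-th roots of unity, and the remainder $g$ has a strictly smaller diagram.

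\textbf{Step 2: rigidity for $P(h)$.} This is the step I expect to be the main obstacle. In the sector of directions where the support function of $K_f$ is attained at a single vertex $\omega_1$, we have $f=c_{\omega_1}e^{\omega_1 z}(1+o(1))$. Taking a branch of the $n$-th root gives $h=\lambda e^{\omega_1 z/n}(1+o(1))$ there. I would use Rouch\'e's theorem on the equations $P(w)=f(z)$ to fix the branch: this counts the solutions and shows that $h$ is, to leading order, a single exponential $e^{\omega_1 z/n}$ in that sector.

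Next consider an edge $[\omega_1,\omega_2]$ of $K_f$. In directions normal to this edge, $h$ behaves like an exponential sum whose frequencies lie on $\frac1n[\omega_1,\omega_2]$. Then $h^n$, and hence $P(h)$, produces the frequencies $((n-j)\omega_1+j\omega_2)/n$ with nonzero coefficients unless only one endpoint survives.

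These intermediate frequencies lie on $\partial K_f$ but are not vertices. If $0$ lies in the interior of $K_f$, they are not in $\frac{n-1}{n}K_f\supset K_g$, and they do not appear in $\sum c_\omega e^{\omega z}$. This is a contradiction. Hence $K_f$ cannot have two noncollinear vertices with $0$ inside, and $K_f$ reduces to a point or a segment $[\omega_1,\omega_2]$.

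When $0\notin\operatorname{int}K_f$, I would instead use a translation argument. Replacing $z$ by $z+2\pi\ii n/\omega_1$ fixes $e^{\omega_1z/n}$. The difference $f(z+T)-f(z)$ then has a strictly smaller diagram yet is still a polynomial composition in $h$ and its translate, and this should force periodicity.

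\textbf{Step 3: conclusion.} Once $K_f$ is a point or a segment with endpoints that are roots of unity, $h$ has diagram $\frac1nK_f$, and the matching from Step 2 shows that $h$ is a linear combination of $e^{\omega_1z/n}$ and $e^{\omega_2z/n}$ plus lower-order terms. I would iterate the diagram-shrinking argument, using the factor $\frac{n-1}{n}$ at each step, to remove the lower-order terms. This gives $h=R_1(e^{cz})$ with $c$ a common submultiple of the frequencies, so $f=P(h)=R(e^{cz})\in\W$.
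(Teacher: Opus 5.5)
Your overall plan matches the paper's: a Borel decomposition into exponentials $\ee^{\omega z}$ with $\omega^k=1$ plus a remainder $g$ with $K_g\subset\rho K$, where $\rho=(n-1)/n$ and $K=\conv(K_f\cup\{0\})$; a coherent $n$-th root $h\approx\gamma\ee^{\omega_1z/n}$; and translation by $2\pi\ii n/\omega_1$. However, the two steps that carry the proof are not established.

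\textbf{Excluding non-antipodal frequencies.} Your Step~2 assumes that near the normal direction of an edge $[\omega_1,\omega_2]$, $h$ ``behaves like an exponential sum'' with frequencies $\omega_1/n,\omega_2/n$, so that $h^n$ produces $((n-j)\omega_1+j\omega_2)/n$. Nothing supports this. The bound $K_h\subset\frac{1}{n}K$ says nothing about whether the singularities of $\B h$ are poles, and the Borel transform of a product is a convolution whose singularities cannot be read off this way. What you actually know there is $p_nh^n\approx B:=c_1\ee^{\omega_1z}+c_2\ee^{\omega_2z}$. The real obstruction is that $B$ has simple zeros, so it has no single-valued $n$-th root. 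Turning this into a contradiction is the missing idea:
\begin{itemize}
\item The zeros $z_j=z_*+t_j\nu$, $t_j\to+\infty$, of $B$ run off in the direction $\nu$ that exposes the edge: $\Rea(\omega_1\nu)=\Rea(\omega_2\nu)=b>0$, strictly larger than $\Rea(\eta\nu)$ for the other $\eta\in E$.
\item On circles $|z-z_j|=r_0$ one has $|B|\geq c\,\ee^{bt_j}$. The other exponentials, and $g$ (by $K_g\subset\rho K$ and \eqref{eq:uniform}), are $o(\ee^{bt_j})$ uniformly there.
\item Rouch\'e then gives exactly one zero of $f$ in each disc.
\item But $|h|\to\infty$ on these circles, and a second Rouch\'e comparison with $p_nh^n$ makes the zero count a multiple of $n\geq2$.
\end{itemize}
This needs only $b>0$, i.e.\ two consecutive points of $E$ at angular distance $<\pi$. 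So your case split on whether $0\in\operatorname{int}K_f$ is unnecessary, and it leaves a hole. For $E=\{1,\ee^{2\pi\ii/3}\}$, where $0$ is a vertex of $K$, you defer to the translation argument. But translation by $2\pi\ii n/\omega_1$ does not annihilate $\ee^{\omega_2z}$, so it cannot help there.

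\textbf{Periodicity.} ``This should force periodicity'' is exactly where the second key idea belongs. After normalizing to $E\subset\{1,-1\}$, put $T=2\pi\ii n$ and $u=f(\cdot+T)-f$, so that $K_u\subset[-\rho,\rho]\subset\D$.
\begin{itemize}
\item Subtracting the translated equation gives $(D^k-1)u=a_Tu$, with $a_T=\Delta S(h(z+T),h(z))/\Delta P(h(z+T),h(z))$, where $\Delta$ is the polynomial divided difference.
\item A coherent root on a horizontal strip containing $\Im z=0$ and $\Im z=2\pi n$ gives $h(x+T)/h(x)\to1$. This is where your branch-fixing enters. Hence $\Delta P\sim np_nh^{n-1}$, and since $\deg S\leq n-1$, $|a_T(x)|\leq C\ee^{-x/n}$.
\item Because $K_u\subset\D$ avoids the $k$-th roots of unity, $D^k-1$ does not shrink the conjugate diagram (Lemma~\ref{lem:operator}: dividing by $\xi^k-1$ creates only removable singularities).
\item By P\'olya's indicator formula, $u$ and $(D^k-1)u$ then have the same radial indicator at $\theta=0$. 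This contradicts the factor $\ee^{-x/n}$ unless $u\equiv0$.
\end{itemize}
Periodicity and finite type then give a Laurent polynomial in $\ee^{z/n}$ immediately. Your Step~3 iteration is therefore unnecessary, and as written it has no mechanism, since $g$ satisfies no equation of the same sublinear type.

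\textbf{Smaller repairs in Step~1.}
\begin{itemize}
\item The correct inclusion is $K_{S(h)}\subset\rho K$, not $\rho K_f$.
\item You must prove $K\subset\overline\D$ before extracting residues at roots of unity.
\item Extreme points of $K_f$ lying in $K_{S(h)}$ need not be $0$. For $k=2$, $h=\ee^{z/2}$, $P(w)=w^2+w$ one gets $f''-f=-\tfrac{3}{4}h$ and $K_f=[1/2,1]$.
\item Exponential type zero does not by itself make $f$ a polynomial, so positivity of the type must be derived from the sublinear estimate.
\end{itemize}
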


We prove Proposition~\ref{prop:pdf-composition} by applying the
Laplace--Borel transform to \eqref{eq:composition-recalled}.

\section{Application of Laplace--Borel transform}\label{sec:borel}

We use $D=\dd/\dd z$. The finite exponential type required below is
provided by Proposition~\ref{prop:entire-type}. We recall the
Laplace--Borel transform and the classical indicator-diagram
description, in the form needed for the rigidity argument.

For a nonzero entire function $u$ of exponential type, define
\[
 I_u(\theta)=\limsup_{r\to\infty}
             \frac{\log|u(r\ee^{\ii\theta})|}{r}.
\]
Its Laplace transform along the ray of angle $\theta$ is
\begin{equation}\label{eq:laplace-ray}
 \B u(\xi)=\int_0^{\infty\ee^{\ii\theta}}
                 \ee^{-\xi z}u(z)\dd z,
 \qquad \Rea(\xi\ee^{\ii\theta})>I_u(\theta).
\end{equation}
It is holomorphic in this half-plane. For sufficiently large
$\Rea(\xi\ee^{\ii\theta})$, termwise integration gives the germ
at infinity
\[
 \B u(\xi)=\sum_{j=0}^{\infty}
                      \frac{u^{(j)}(0)}{\xi^{j+1}}.
\]
The ray integrals in \eqref{eq:laplace-ray} agree by analytic
continuation. Rotation of the ray continues this germ to the
complement of a compact convex set $K_u$, the \emph{conjugate
indicator diagram}. Equivalently, $K_u$ is the smallest compact
convex set outside which the Borel germ has a single-valued
holomorphic continuation; it is nonempty for $u\neq0$.

With the support-function convention
\[
 H_K(z)=\max_{\xi\in K}\Rea(\xi z),
\]
P\'olya's theorem gives
\begin{equation}\label{eq:indicator}
 I_u(\theta)=H_{K_u}(\ee^{\ii\theta}).
\end{equation}
This is the ordinary upper radial indicator; no regularity of
directional growth is assumed. For this formulation and the
inversion formula below, see
\cite[formula~(3)]{Bei14}.
For every $\varepsilon>0$ we also have the uniform estimate
\begin{equation}\label{eq:uniform}
 |u(z)|\leq C_\varepsilon
       \exp\bigl(H_{K_u}(z)+\varepsilon|z|\bigr)
 \qquad(z\in\C).
\end{equation}
The function is recovered by
\begin{equation}\label{eq:polya}
 u(z)=\frac{1}{2\pi\ii}\int_\gamma
                  \B u(\xi)\ee^{\xi z}\dd\xi,
\end{equation}
where $\gamma$ is a positively oriented contour surrounding $K_u$
and lying in its complement. Formula~\eqref{eq:uniform} follows
by taking $\gamma$ in a sufficiently small convex neighborhood
of $K_u$. We set $K_0=\varnothing$ and treat the zero function
separately when using an indicator.

A useful immediate consequence is the following carrier criterion.
If $u\neq0$ and the Borel germ extends outside a compact convex set
$A$, then $K_u\subset A$. Conversely, this inclusion supplies that
continuation. Formula~\eqref{eq:polya}, with contours in arbitrarily
small convex neighborhoods of $A$, also gives
\eqref{eq:uniform} with $H_A$ in place of $H_{K_u}$. In particular,
derivatives and fixed translations preserve the space of functions
whose diagrams lie in $A$, and so do finite linear combinations.
For derivatives this also follows from Cauchy's estimate with radius
one; for translations use
$H_A(z+T)\leq H_A(z)+H_A(T)$.

Set
\[
 L=D^k-1,\qquad \mu_k=\{\lambda\in\C:\lambda^k=1\}.
\]
The next lemma isolates the operator property that will turn a
translation estimate into an exact period.

\begin{lem}\label{lem:operator}
If $u$ is an entire function of exponential type, then
\begin{equation}\label{eq:borel-identity}
 \B(Lu)(\xi)=(\xi^k-1)\B u(\xi)
       -\sum_{j=0}^{k-1}u^{(j)}(0)\xi^{k-1-j}.
\end{equation}
Consequently,
\begin{equation}\label{eq:carrier-inclusion}
 K_u\subset\conv(K_{Lu}\cup\mu_k).
\end{equation}
If $u\neq0$ and $K_u\subset\D$, where $\D=\{\xi:|\xi|<1\}$,
then $Lu\neq0$ and
\begin{equation}\label{eq:diagram-preserved}
 K_{Lu}=K_u.
\end{equation}
\end{lem}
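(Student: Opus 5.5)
The identity \eqref{eq:borel-identity} will be checked on the germ at infinity, and the two diagram statements will then follow from the carrier criterion. For sufficiently large $\Rea\xi$ (on the ray $\theta=0$, say) we have $\B u(\xi)=\sum_{j\ge0}u^{(j)}(0)\xi^{-j-1}$. Since $Lu$ is again of exponential type, the same applies to it:
\[
 \B(Lu)(\xi)=\sum_{j\ge0}\bigl(u^{(j+k)}(0)-u^{(j)}(0)\bigr)\xi^{-j-1}.
\]
Multiplying the series for $\B u$ by $\xi^k$ and discarding the terms with nonnegative powers of $\xi$, namely $\sum_{j=0}^{k-1}u^{(j)}(0)\xi^{k-1-j}$, gives exactly $\sum_{j\ge0}u^{(j+k)}(0)\xi^{-j-1}$. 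This proves \eqref{eq:borel-identity} as an identity of germs. By analytic continuation it then holds wherever both sides are defined. Alternatively, one can integrate by parts $k$ times along a ray.

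For \eqref{eq:carrier-inclusion}, rewrite the identity as
\[
 \B u(\xi)=\frac{\B(Lu)(\xi)+\sum_{j<k}u^{(j)}(0)\xi^{k-1-j}}{\xi^k-1}.
\]
Put $A=\conv(K_{Lu}\cup\mu_k)$. The numerator continues single-valuedly and holomorphically to $\C\setminus K_{Lu}$, since the polynomial part is entire. The denominator vanishes only on $\mu_k\subset A$. Hence the germ of $\B u$ continues holomorphically to $\C\setminus A$. The continuation is single-valued because $\C\setminus A$ is connected and is contained in the domain of the numerator, which is itself single-valued. The carrier criterion then gives $K_u\subset A$. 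If $Lu=0$, the numerator is a polynomial and the same argument gives $K_u\subset\conv\mu_k$, consistent with the convention $K_0=\varnothing$.

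For \eqref{eq:diagram-preserved}, suppose $u\neq0$ and $K_u\subset\D$. The right side of \eqref{eq:borel-identity} continues holomorphically outside $K_u$, so $K_{Lu}\subset K_u$ by the carrier criterion, or trivially if $Lu=0$. For the reverse inclusion, first note that $Lu\neq0$. Otherwise $\B u=\text{polynomial}/(\xi^k-1)$. This is holomorphic outside $K_u$, which is compact in $\D$, so every point of $\mu_k$ would be a removable singularity. Then $\B u$ would be entire and vanish at infinity, hence $\B u\equiv0$ and $u=0$, a contradiction.

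Next, $K_u\subset\conv(K_{Lu}\cup\mu_k)$, but this alone does not give $K_u\subset K_{Lu}$. Instead, use the formula for $\B u$ above directly. Near each root of unity $\lambda$, the function $\B u$ is holomorphic, since $\lambda\notin K_u$. So the quotient formula provides a continuation of $\B u$ to $\C\setminus(K_{Lu}\cup\mu_k)$ that is in fact holomorphic at $\mu_k$ as well. This uses $\mu_k\cap K_u=\varnothing$ and uniqueness of continuation on the connected set $\C\setminus\conv(K_u\cup K_{Lu})$. The subtle point is single-valuedness on the complement of $K_{Lu}$, which is connected because $K_{Lu}$ is convex. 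The numerator is single-valued there, the quotient is meromorphic there with possible poles only at $\mu_k$, and these poles are removable by the previous remark. Hence $\B u$ continues single-valuedly to $\C\setminus K_{Lu}$, and $K_u\subset K_{Lu}$.

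The main obstacle is exactly this last step: justifying that the singularities at $\mu_k$ are removable for the global continuation, not merely for the original germ. The key is that $\mu_k$ lies on the unbounded connected component where both continuations agree, because $K_u\cup K_{Lu}\subset\D$ while $\mu_k\subset\partial\D$.
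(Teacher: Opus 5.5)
Your proposal is correct and matches the paper's proof step by step: you reindex the Borel series to get the identity, divide by $\xi^k-1$ and apply the carrier criterion to get the inclusion, and use removability at $\mu_k$ to show both $Lu\neq0$ and $K_u\subset K_{Lu}$. That removability rests on $\mu_k\cap K_u=\varnothing$ and on the two continuations agreeing on the connected exterior domain. The only cosmetic difference is that you read off $K_{Lu}\subset K_u$ directly from the identity, whereas the paper cites the invariance of convex carriers under differentiation and linear combinations.
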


\begin{proof}
Reindexing the series defining the Borel transform proves
\eqref{eq:borel-identity} near infinity. Dividing by $\xi^k-1$
extends $\B u$ outside $K_{Lu}$ except possibly at the points of
$\mu_k$, which proves \eqref{eq:carrier-inclusion} by the carrier
criterion.

Differentiation and subtraction do not enlarge a convex carrier, so
$K_{Lu}\subset K_u$. Assume now that $K_u\subset\D$ and $Lu\neq0$.
The same quotient continues $\B u$ to $\C\setminus K_{Lu}$ apart
from possible poles at $\mu_k$. These points lie outside $K_u$,
where the original continuation of $\B u$ is already holomorphic.
The two continuations agree near infinity and hence throughout their
common exterior domain. The apparent poles at $\mu_k$ are therefore
removable. The carrier criterion gives $K_u\subset K_{Lu}$, proving
\eqref{eq:diagram-preserved}.

If instead $Lu=0$, formula~\eqref{eq:borel-identity} makes $\B u$ a
rational function whose only possible poles lie in $\mu_k$. The same
removability argument eliminates all of them. Since $\B u(\xi)\to0$
at infinity, $\B u=0$, and the defining series gives $u=0$. This
contradicts the hypothesis.
\end{proof}

The following sublinear estimate separates the characteristic
exponentials from a remainder in a strictly contracted diagram:
\begin{equation}\label{eq:sublinear}
 |f^{(k)}(z)-f(z)|\leq C\bigl(1+|f(z)|^\rho\bigr),
 \qquad z\in\C,\qquad 0\leq\rho<1.
\end{equation}

\begin{prop}\label{prop:decomposition}
Let $f$ be a nonconstant entire function of finite exponential type
satisfying \eqref{eq:sublinear}, and put
\[
 g=Lf,\qquad K=\conv(K_f\cup\{0\}).
\]
There are a nonempty set $E\subset\mu_k$, nonzero constants
$c_\lambda$ for $\lambda\in E$, and an entire function $v$ of
exponential type such that
\begin{equation}\label{eq:decomposition}
 f(z)=\sum_{\lambda\in E}c_\lambda\ee^{\lambda z}+v(z),
 \qquad K_v\subset\rho K,
\end{equation}
and
\begin{equation}\label{eq:hull-control}
 K\subset\conv(E\cup\{0\})\subset\overline\D.
\end{equation}
\end{prop}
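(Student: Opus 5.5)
The plan is to transfer the pointwise estimate \eqref{eq:sublinear} to the conjugate indicator diagram of $g=Lf$, then split off the poles of $\B f$ at the roots of unity, and control $K$ by a support-function argument.

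\emph{Step 1: $K_g\subset\rho K$.} By \eqref{eq:uniform} applied to $f$, and since $H_{K_f}\le H_K$ and $H_K\ge0$ (because $0\in K$), we get for every $\varepsilon>0$
\[
 |g(z)|\le C\bigl(1+|f(z)|^\rho\bigr)\le C_\varepsilon\exp\bigl(\rho H_K(z)+\varepsilon|z|\bigr).
\]
Hence $I_g(\theta)\le \rho H_K(\ee^{\ii\theta})=H_{\rho K}(\ee^{\ii\theta})$ whenever $g\neq0$. By P\'olya's formula \eqref{eq:indicator}, and since a compact convex set is determined by its support function, this gives $K_g\subset\rho K$. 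If $g=0$ we regard this as trivially true.

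\emph{Step 2: $K\subset\overline\D$.} By \eqref{eq:carrier-inclusion}, $K_f\subset\conv(\rho K\cup\mu_k)$, so $K\subset\conv(\rho K\cup\mu_k\cup\{0\})$. Taking support functions gives
\[
 H_K\le\max\bigl(\rho H_K,\ H_{\conv(\mu_k\cup\{0\})}\bigr).
\]
In any direction where $H_K>0$, the first alternative is impossible because $\rho<1$. Where $H_K=0$, the inequality $H_K\le H_{\conv(\mu_k\cup\{0\})}$ holds trivially. Thus $K\subset\conv(\mu_k\cup\{0\})\subset\overline\D$, and consequently $\rho K\subset\rho\overline\D\subset\D$ misses $\mu_k$.

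\emph{Step 3: the decomposition.} By \eqref{eq:borel-identity},
\[
 \B f=\frac{\B g+p}{\xi^k-1},
\]
with $p$ a polynomial. Since $\B g$ is holomorphic off $\rho K$ and $\mu_k\cap\rho K=\varnothing$, this continues $\B f$ to $\C\setminus(\rho K\cup\mu_k)$ with at most simple poles at the points $\lambda\in\mu_k$. Let $c_\lambda$ be the residues, let $E$ be the set of $\lambda$ with $c_\lambda\ne0$, and put
\[
 v=f-\sum_{\lambda\in E}c_\lambda\ee^{\lambda z}.
\]
Since $\B(\ee^{\lambda z})=1/(\xi-\lambda)$, the germ $\B v$ is holomorphic off $\rho K$, so the carrier criterion gives $K_v\subset\rho K$. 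Then $K_f\subset\conv(\rho K\cup E)$, and repeating the support-function argument of Step 2 with $E$ in place of $\mu_k$ yields \eqref{eq:hull-control}.

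\emph{Step 4: $E\neq\varnothing$.} I expect this to be the main obstacle. If $E=\varnothing$, then $f=v$, so $K\subset\rho K$, which forces $K=\{0\}$; that is, $f$ has exponential type zero.
\begin{itemize}
\item If $f$ is a nonconstant polynomial, then $g=f^{(k)}-f\sim -f$ at infinity, which contradicts \eqref{eq:sublinear} because $\rho<1$.
\item If $f$ is transcendental, I would use the Wiman--Valiron relation \eqref{eq:wv}. Zero type gives $\log\mu(r)=o(r)$, and by monotonicity of $\nu$ this yields $\nu(r)=o(r)$. Hence at the points $z_r$ outside the exceptional set we have $f^{(k)}(z_r)=o(1)f(z_r)$ and so $|g(z_r)|=(1+o(1))M(r)$. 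This contradicts $|g|\le C(1+M(r)^\rho)$.
\end{itemize}
In both cases we reach a contradiction, so $E\neq\varnothing$. Finally, the $c_\lambda$ are nonzero by the definition of $E$.
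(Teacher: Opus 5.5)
Your proof is correct. Steps 1--3 coincide with the paper's argument. The only cosmetic difference there is that the paper reads $I_g\le\rho\max\{I_f,0\}$ directly off \eqref{eq:sublinear} along rays and applies \eqref{eq:indicator} to both functions, whereas you bound $f$ through \eqref{eq:uniform}.

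The genuine difference is how you exclude $E=\varnothing$. The paper proves positive exponential type of $f$ at the outset, with no case distinction. At a maximum-modulus point $z_r$, \eqref{eq:sublinear} gives $|f^{(k)}(z_r)|\ge M(r,f)/2$ for large $r$. Cauchy's estimate on a circle of fixed radius $d_0$ about $z_r$ then yields $M(r+d_0,f)\ge \frac{d_0^k}{2k!}M(r,f)$. Choosing $d_0^k>2k!$ and iterating gives $\log M(r,f)\ge cr$ with $c>0$, which contradicts $K=\{0\}$ via \eqref{eq:uniform}.

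You instead split into polynomial and transcendental cases. In the transcendental case you use Wiman--Valiron with $\nu(r)\le\log\mu(\ee r)-\log\mu(r)=o(r)$ to get $f^{(k)}(z_r)=o(f(z_r))$. This is valid, since the Wiman--Valiron relation holds for every transcendental entire function. Both arguments rest on the same mechanism: at maximum-modulus points $f^{(k)}$ cannot nearly cancel $f$ unless $f$ grows at a positive exponential rate. The paper's version needs only Cauchy's inequality, no exceptional set and no case split, and it yields an explicit lower growth rate. Yours imports heavier machinery (already present elsewhere in the paper) for what turns out to be a two-line step, not the main obstacle you anticipated.
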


\begin{proof}
First, the exponential type of $f$ is positive. At a point $z_r$
where $|f(z_r)|=M(r,f)$, inequality~\eqref{eq:sublinear} gives
$|f^{(k)}(z_r)|\geq M(r,f)/2$ for all sufficiently large $r$.
Cauchy's estimate on a circle of fixed radius $d_0$ centered at
$z_r$ then gives
\[
 M(r+d_0,f)\geq\frac{d_0^k}{2k!}M(r,f).
\]
Choose $d_0$ so that $d_0^k/(2k!)>1$ and iterate. This gives a
positive exponential lower bound.

If $g\neq0$, inequality~\eqref{eq:sublinear} and the exact indicator
formula~\eqref{eq:indicator} give
\[
 H_{K_g}(\ee^{\ii\theta})
 \leq\rho\max\{H_{K_f}(\ee^{\ii\theta}),0\}
 =\rho H_K(\ee^{\ii\theta}).
\]
Thus $K_g\subset\rho K$; this inclusion also holds when $g=0$.
Together with \eqref{eq:carrier-inclusion}, it follows that
\[
 K\subset\conv(\rho K\cup\mu_k\cup\{0\}).
\]
Let $M=\conv(\mu_k\cup\{0\})$. In any direction, the preceding
inclusion says $H_K\leq\max\{\rho H_K,H_M\}$. Since $H_K\geq0$
and $\rho<1$, this implies $H_K\leq H_M$. Hence $K\subset M$
and, in particular, $K\subset\overline\D$.

The function $\B g$ is holomorphic outside $\rho K$. As the roots of
$\xi^k-1$ are simple and lie outside $\rho K$, the quotient in
\eqref{eq:borel-identity} shows that $\B f$ is meromorphic there,
with at most simple poles at the points of $\mu_k$. Let $E$ be the
set of actual poles and $c_\lambda$ their residues. The function
\[
 \B f(\xi)-\sum_{\lambda\in E}\frac{c_\lambda}{\xi-\lambda}
\]
is holomorphic on $\C\setminus\rho K$. Since
$\B(\ee^{\lambda z})(\xi)=(\xi-\lambda)^{-1}$, the carrier
criterion gives \eqref{eq:decomposition}.

If $E$ were empty, we would have $K\subset\rho K$, and comparison
of supports would force $K=\{0\}$. Estimate~\eqref{eq:uniform} would
then make the exponential type of $f$ zero, contradicting
the lower bound at the beginning of the proof. Thus $E\neq\varnothing$. Finally,
\eqref{eq:decomposition} gives
$K\subset\conv(E\cup\rho K\cup\{0\})$. The same support comparison
removes the term $\rho K$ and proves \eqref{eq:hull-control}.
\end{proof}

\section{Polynomial-composition rigidity}\label{sec:rigidity}

We now impose the composition $f=P(h)$. The first consequence is purely
geometric: two characteristic frequencies that form an exposed edge
away from the origin create a forbidden zero count.

\begin{lem}\label{lem:antipodal}
Let $P$ be a polynomial of degree $n\geq2$, let $h$ be entire, and
suppose that the nonconstant function $f=P(h)$ is of finite
exponential type and satisfies \eqref{eq:sublinear} for some $0\leq\rho<1$. In the decomposition
of Proposition~\ref{prop:decomposition}, the set $E$ is contained in
$\{\lambda,-\lambda\}$ for some $\lambda\in\mu_k$.
\end{lem}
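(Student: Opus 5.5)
The plan is to argue by contradiction. We show that if $E$ is not contained in an antipodal pair, then $\conv(E\cup\{0\})$ has an edge joining two points of $E$ that avoids the origin. In the direction normal to that edge, $f$ behaves like a two-term exponential sum, and $f$ then has a degree-one covering property that $P(h)$ with $\deg P\ge2$ cannot have.

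\emph{Step 1: geometry.} Each $\lambda\in E$ is an actual pole of $\B f$, so $E\subset K_f\subset K$. Combined with \eqref{eq:hull-control}, this gives $K=\conv(E\cup\{0\})$, a polygon whose nonzero vertices are exactly the points of $E\subset\mu_k$ on the unit circle. Suppose $E\not\subset\{\lambda,-\lambda\}$ for every $\lambda$. Then $|E|\ge2$. If $|E|\ge3$, some two consecutive points of $E$ on the circle are not antipodal. If $|E|=2$, the two points are not antipodal by assumption. In either case there are $\lambda_1\neq\lambda_2$ in $E$ such that $[\lambda_1,\lambda_2]$ is an edge of $K$ not passing through $0$. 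Choose $\theta$ so that $\Rea(\xi\ee^{\ii\theta})$ attains its maximum $h_0:=H_K(\ee^{\ii\theta})>0$ on $K$ exactly along this edge.

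\emph{Step 2: asymptotics in a strip.} In a half-strip $\Sigma=\{z=t\ee^{\ii\theta}+s\ee^{\ii(\theta+\pi/2)}:t>T,\ |s|\le1\}$ we use \eqref{eq:decomposition}. The terms $c_\lambda\ee^{\lambda z}$ with $\lambda\in E\setminus\{\lambda_1,\lambda_2\}$ have modulus at most $\ee^{(h_0-\delta)t}$ for some $\delta>0$, because those $\lambda$ lie strictly inside the supporting half-plane. By \eqref{eq:uniform} applied with the carrier $\rho K$, $|v(z)|\le C\ee^{(\rho h_0+\varepsilon)t}$. Hence
\[
 f(z)=\ee^{\lambda_1 z}\bigl(c_1+c_2\ee^{(\lambda_2-\lambda_1)z}+O(\ee^{-\delta' t})\bigr),
 \qquad z\in\Sigma .
\]
Along the ray, $(\lambda_2-\lambda_1)z$ is purely imaginary because the edge is orthogonal to the direction. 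So $\phi(z)=c_1+c_2\ee^{(\lambda_2-\lambda_1)z}$ has infinitely many zeros in $\Sigma$, all simple, equally spaced, and with $|\phi'|$ bounded below on small circles around them.

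Fix $A>\max\{|P(w)|:P'(w)=0\}$. For $|a|\le A$, the function $a\ee^{-\lambda_1 z}$ is also $O(\ee^{-h_0 t})$ in $\Sigma$. Rouché's theorem then gives, for each far-out zero of $\phi$, a small disc $D_j$ in which $f-a$ has exactly one zero, uniformly for all $|a|\le A$. Let $U$ be the component of $f^{-1}(\{|w|<A\})$ containing the zero of $f$ in $D_j$. Then $U\subset D_j$, and $f:U\to\{|w|<A\}$ is proper of degree one, hence conformal.

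\emph{Step 3: the contradiction.} Write $f=P\circ h$ on $U$. Since $P\circ h$ is injective and proper onto $\{|w|<A\}$, the map $h$ is injective on $U$, and $P:h(U)\to\{|w|<A\}$ is a bijection. It is also proper, since a boundary sequence in $h(U)$ pulls back to one in $U$. So $h(U)$ is open and relatively closed in $P^{-1}(\{|w|<A\})$. This preimage contains all critical points of $P$, because $A$ exceeds all critical values. By Riemann--Hurwitz it is connected: it is a simply connected domain on which $P$ has degree $n$. Therefore $h(U)$ is the whole preimage, and $P$ is $n$-to-one there with $n\ge2$. This contradicts injectivity, so $E$ lies in an antipodal pair.

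The main obstacle is Step 2. The Rouché count must hold uniformly in $a$ with $|a|\le A$, and the remainder must be controlled in a full strip, not just on the ray. The first thing to check is that \eqref{eq:uniform} for $v$ with carrier $\rho K$, together with $h_0>0$ (which is exactly where the edge avoiding $0$ is used), makes every error term exponentially smaller than $\ee^{h_0 t}$ throughout $\Sigma$.
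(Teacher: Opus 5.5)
Your proof is correct. Its first two steps match the paper's: the exposed edge of $\conv(E\cup\{0\})$ away from the origin, then Rouch\'e's theorem giving exactly one zero of $f$ near each far-out zero of the two-term sum. The routes differ only at the end.

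The paper stays with the zero count. Since $|f|\to\infty$ uniformly on the circles, $|h|\to\infty$ there, so $|P(h)-p_nh^n|<|p_nh^n|$ on them. A second application of Rouch\'e's theorem then makes the number of zeros of $f$ in each disc equal to that of $h^n$, which is a multiple of $n\geq2$.

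You instead make the count uniform over all values $|a|\le A$, with $A$ above every critical value of $P$. This gives a component $U$ mapped conformally onto $\{|w|<A\}$. You then reach a contradiction from the connectedness of $P^{-1}(\{|w|<A\})$ via Riemann--Hurwitz.

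That argument is valid but costs more: uniformity in $a$, properness, and the Riemann--Hurwitz connectivity fact. The paper needs only the crude growth of $|h|$ on the circles and the leading-term comparison. Your version does isolate the underlying obstruction without using $P(h)\sim p_nh^n$: $f$ is univalent onto a large disc, while $P$ has a critical point in the full preimage of that disc. Once you know $h(U)=P^{-1}(\{|w|<A\})$, you can also finish more quickly. Since $f'=P'(h)h'\neq0$ on $U$, the set $h(U)$ contains no critical point of $P$, which contradicts it being the whole preimage.

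Two small slips should be fixed.

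First, for $|E|\geq3$, a consecutive pair that is merely non-antipodal need not span an edge of $K$. If the empty arc between them exceeds $\pi$, the origin is a vertex of $K$ and the chord is a diagonal. You need a consecutive pair whose empty arc is shorter than $\pi$. Such a pair exists, because among at least three empty arcs at most one can have length $\geq\pi$.

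Second, the zeros of $\phi$ lie on a line parallel to $\ee^{\ii\theta}$, but its distance from the ray through the origin depends on $\log|c_1/c_2|$. The half-strip must therefore be centred on that line, or widened, rather than fixed as $|s|\le1$. What Rouch\'e needs is a lower bound for $|\phi|$ (not $|\phi'|$) on the circles that is uniform in $j$. This follows from the periodicity of $\phi$ along that line.
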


\begin{proof}
Suppose otherwise. The finite set $E$ lies on the unit circle. It then
has two consecutive points $\lambda,\mu$, in circular order, whose
intervening empty arc has length strictly less than $\pi$. For two
nonantipodal points take the shorter arc; for at least three points
take a shortest consecutive arc. The chord joining these points is
an exposed edge of $\conv(E\cup\{0\})$. Accordingly there is a
unit complex number $\nu$ such that
\begin{equation}\label{eq:exposed-edge}
 \Rea(\lambda\nu)=\Rea(\mu\nu)=b>0,
 \qquad
 \Rea(\eta\nu)<b\quad
 (\eta\in E\setminus\{\lambda,\mu\}).
\end{equation}
For completeness, rotate the endpoints to $\ee^{\pm\ii\beta}$ with
$0<\beta<\pi/2$. The corresponding support value is $\cos\beta>0$;
all the other points lie on the complementary arc and have smaller
support value. Rotating back gives \eqref{eq:exposed-edge}.

Consider
\[
 B(z)=c_\lambda\ee^{\lambda z}
       +c_\mu\ee^{\mu z}.
\]
Its zeros satisfy
$\ee^{(\lambda-\mu)z}=-c_\mu/c_\lambda$ and are simple.
Since $\Rea((\lambda-\mu)\nu)=0$, one half of this arithmetic
progression can be written
\[
 z_j=z_*+t_j\nu,\qquad t_j\longrightarrow+\infty.
\]
Choose a fixed $r_0>0$ smaller than the distance between consecutive
zeros; see Figure~\ref{fig:exposed-edge-zeros}.

\begin{figure}[htbp]
\centering
\definecolor{edgeblue}{HTML}{2D5E88}
\definecolor{hullteal}{HTML}{178A82}
\definecolor{supportgold}{HTML}{B2792F}
\definecolor{axisgray}{HTML}{98A5AF}

\begin{tikzpicture}[
  x=1.1cm,y=1.1cm,font=\small,
  line cap=round,line join=round,
  axis/.style={
    draw=axisgray,line width=.45pt,
    -{Stealth[length=4pt,width=3.2pt]}
  },
  direction/.style={
    draw=edgeblue,line width=.9pt,
    -{Stealth[length=5pt,width=4pt]}
  },
  zero/.style={draw=edgeblue,line width=1pt},
  every node/.style={inner sep=2pt}
]
\path[use as bounding box] (0,0) rectangle (13.5,5.1);

\begin{scope}[shift={(2.3,2.55)}]
  \coordinate (O) at (0,0);
  \coordinate (L) at (36:1.6);
  \coordinate (M) at (-36:1.6);
  \coordinate (H) at (-1.6,0);
  \coordinate (l) at ($(O)!.58!(L)$);
  \coordinate (m) at ($(O)!.58!(M)$);
  \coordinate (h) at ($(O)!.58!(H)$);

  \fill[edgeblue!7] (H)--(L)--(M)--cycle;
  \fill[hullteal!18] (h)--(l)--(m)--cycle;

  \draw[axisgray!75,densely dotted,line width=.65pt]
    (O) circle[radius=1.6];

  \draw[axis] (-1.95,0)--(2.05,0)
    node[right,text=axisgray] {$\operatorname{Re}\xi$};
  \draw[axis] (0,-1.8)--(0,1.9)
    node[above,text=axisgray] {$\operatorname{Im}\xi$};
  \node[text=axisgray] at (-1.08,1.72) {$|\xi|=1$};

  \draw[supportgold,dash pattern=on 3pt off 2.5pt,
        line width=.8pt]
    ($(L)+(0,.55)$)--($(M)-(0,.55)$);
  \node[anchor=west,text=supportgold]
    at ($(L)+(.12,.65)$) {$\operatorname{Re}(\xi\nu)=b$};

  \draw[edgeblue,line width=.95pt]
    (H)--(L)--(M)--cycle;
  \draw[edgeblue,line width=1.7pt] (L)--(M);
  \draw[hullteal,dash pattern=on 3pt off 2pt,
        line width=.85pt]
    (h)--(l)--(m)--cycle;

  \draw[direction] (.12,.15)--(1.02,.15)
    node[midway,above] {$\nu$};

  \node[text=edgeblue] at (-.45,.77) {$C$};
  \node[text=hullteal] at (.46,-.28) {$\rho C$};
  \node[below left,text=axisgray] at (O) {$0$};

  \foreach \p in {L,M,H}
    \fill[edgeblue] (\p) circle[radius=2pt];

  \node[above right,text=edgeblue] at (L) {$\lambda$};
  \node[below right,text=edgeblue] at (M) {$\mu$};
  \node[above left,text=edgeblue] at (H) {$\eta$};
\end{scope}

\node at (2.6,.55)
  {$C=\operatorname{conv}(E\cup\{0\})$};
\node at (2.6,.05)
  {\textup{(a)} Frequency plane ($\nu=1$)};

\begin{scope}[shift={(6.55,2.55)}]
  \node at (3.2,1.75)
    {$B(z)=c_\lambda e^{\lambda z}+c_\mu e^{\mu z}$};

  \draw[axisgray,densely dashed,line width=.5pt,
        -{Stealth[length=4pt,width=3.2pt]}]
    (-.05,0)--(6.55,0);

  \foreach \x in {1,3.2,5.4}{
    \filldraw[fill=white,draw=edgeblue,line width=1pt]
      (\x,0) circle[radius=.62];
    \fill[hullteal,opacity=.06]
      (\x,0) circle[radius=.62];
  }

  \draw[direction] (.55,1.05)--(1.5,1.05)
    node[midway,above] {$\nu$};

  \draw[supportgold,line width=.8pt]
    (1,0)--(1,.62)
    node[midway,right,text=supportgold] {$r_0$};

  \foreach \x in {1,3.2,5.4}{
    \draw[zero] (\x-.055,-.055)--(\x+.055,.055);
    \draw[zero] (\x-.055,.055)--(\x+.055,-.055);
  }

  \node[text=edgeblue] at (1,-.88) {$z_j$};
  \node[text=edgeblue] at (3.2,-.88) {$z_{j+1}$};
  \node[text=edgeblue] at (5.4,-.88) {$z_{j+2}$};

  \node at (3.2,-1.4) {$|z-z_j|=r_0$};
  \node at (3.2,-2)
    {$z_j=z_*+t_j\nu,\quad t_j\to+\infty$};
\end{scope}

\node at (9.75,.05) {\textup{(b)} Zeros of $B$};

\end{tikzpicture}
\caption{Exposed edge and circles around zeros.}
\label{fig:exposed-edge-zeros}
\end{figure}

For $|\zeta|=r_0$,
\[
 B(z_j+\zeta)
 =c_\lambda\ee^{\lambda z_j}
       \bigl(\ee^{\lambda\zeta}-\ee^{\mu\zeta}\bigr).
\]
The second factor has no zeros on this circle. Therefore
\begin{equation}\label{eq:boundary-lower}
 |B(z)|\geq c\ee^{bt_j}
 \qquad\bigl(|z-z_j|=r_0\bigr)
\end{equation}
with a constant $c>0$ independent of $j$.

By \eqref{eq:exposed-edge}, all the remaining characteristic terms
in \eqref{eq:decomposition} are $o(\ee^{bt_j})$ on these circles,
uniformly in their angular parameter. To estimate the remainder, use
$K_v\subset\rho K$, \eqref{eq:hull-control}, and
\eqref{eq:uniform}. Since the offsets $z-z_j$ stay bounded, they give
\[
 |v(z)|\leq C_\varepsilon
           \exp\bigl((\rho b+\varepsilon)t_j\bigr)
 \qquad\bigl(|z-z_j|=r_0\bigr).
\]
Choose $0<\varepsilon<(1-\rho)b$. Thus
$f-B=o(\ee^{bt_j})$ uniformly on the circles. From
\eqref{eq:boundary-lower} and Rouch\'e's theorem, $f$ has exactly
one zero, counted with multiplicity, in each disk
$|z-z_j|<r_0$ for all sufficiently large $j$.

On the same boundary circles, $|f|\to\infty$ uniformly. Since
$f=P(h)$ and a polynomial is bounded on each bounded set,
$|h|\to\infty$ uniformly there. Writing $p_n$ for the leading
coefficient of $P$, we consequently have
\[
 |P(h)-p_nh^n|<|p_nh^n|
\]
on each sufficiently distant circle. A second application of
Rouch\'e's theorem shows that the number of zeros of $f$ in the
corresponding disk equals the number of zeros of $h^n$. The latter
is divisible by $n\geq2$, contradicting the count one.
\end{proof}

The conclusion of Lemma~\ref{lem:antipodal} gives a dominant
exponential in a half-plane direction. The remaining step is to show
that its natural period is an exact period of the whole function.

\begin{thm}\label{thm:composition}
Let $h$ be entire, let $P,S\in\C[w]$, and suppose that
$n=\deg P\geq2$ and $\deg S\leq n-1$. The zero polynomial is allowed
for $S$. If $f=P(h)$ is nonconstant and
\begin{equation}\label{eq:composition-recalled}
 f^{(k)}-f=S(h),
\end{equation}
then, for some $\lambda\in\C$ with $\lambda^k=1$,
\begin{equation}\label{eq:laurent}
 f(z)=\sum_{j=-n}^{n}a_j\exp\!\left(\frac{j\lambda z}{n}\right),
 \qquad a_j\in\C.
\end{equation}
If $k$ is odd, $\lambda$ can be chosen so that $a_j=0$ for $j<0$.
In particular, $f\in\W$.
\end{thm}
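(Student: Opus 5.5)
We first check the hypotheses of Proposition~\ref{prop:decomposition}. Eliminating $h$ from $f=P(h)$ and $f^{(k)}=f+S(h)$ (a resultant) gives an algebraic relation between $f$ and $f^{(k)}$. Hence Proposition~\ref{prop:entire-type} shows that $f$ has exponential type. Since $|P(w)|\geq c|w|^n$ and $|S(w)|\leq C(1+|w|^{n-1})$ for large $|w|$, we obtain \eqref{eq:sublinear} with $\rho=(n-1)/n$. Proposition~\ref{prop:decomposition} then applies, and Lemma~\ref{lem:antipodal} gives $E\subset\{\lambda,-\lambda\}$ for some $\lambda\in\mu_k$. Thus
\[
 f=c_\lambda\ee^{\lambda z}+c_{-\lambda}\ee^{-\lambda z}+v,\qquad K_v\subset\rho K,\qquad K\subset\conv\{\lambda,-\lambda,0\}.
\]
When $k$ is odd, $-\lambda\notin\mu_k$, so $E=\{\lambda\}$ and $K\subset[0,\lambda]$.

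The core step is a translation argument. Put $T=2\pi\ii n/\lambda$, so $\ee^{\pm\lambda T}=1$ and $\ee^{\pm\lambda T/n}=1$, and set $u(z)=f(z+T)-f(z)=v(z+T)-v(z)$. By the carrier criterion, $K_u\subset\rho K\subset\D$. Suppose $u\neq0$. Then Lemma~\ref{lem:operator} gives $K_{Lu}=K_u$, where
\[
 Lu(z)=S(h(z+T))-S(h(z)).
\]
To contradict this, I work in the half-plane where $\Rea(\lambda z)$ is large, where $c_\lambda\ee^{\lambda z}$ dominates $f$. There $|h|\to\infty$ and $h^n\sim (c_\lambda/p_n)\ee^{\lambda z}$, so on suitable simply connected regions $h$ is a single branch $\sim\alpha\ee^{\lambda z/n}$ with error of relative size $O(\ee^{-(1-\rho-\varepsilon)\Rea\lambda z})$. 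With $\delta(z)=h(z+T)-h(z)$ one has $u=\bigl(P'(h)+O(h^{n-2}\delta)\bigr)\delta$ and $Lu=\bigl(S'(h)+O(h^{n-3}\delta)\bigr)\delta$. Since $\deg S'\leq n-2$, this yields $|Lu|\leq C|u|\,|h|^{-1}\leq C|u|\ee^{-\Rea(\lambda z)/n}$ in that half-plane. Evaluating along the ray $z=t\bar\lambda$, the direction in which $\Rea(\lambda z)$ grows fastest, gives $I_{Lu}\leq I_u-1/n$ in that direction. Since \eqref{eq:indicator} holds for every direction, this forces $H_{K_{Lu}}(\bar\lambda)<H_{K_u}(\bar\lambda)$, contradicting $K_{Lu}=K_u$. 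Hence $u=0$ and $f$ has period $T$.

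A $T$-periodic entire function of exponential type is a finite sum $\sum_j a_j\ee^{j\lambda z/n}$. Finiteness follows from the growth bound on the Fourier coefficients in the variable $\ee^{\lambda z/n}$. The inclusion $K_f\subset\overline\D$ gives $|j|/n\leq1$, which is \eqref{eq:laurent}. For odd $k$, $K_f\subset[0,\lambda]$ kills the negative $j$. In either case $f$ is a rational function of $\ee^{\lambda z/n}$, so $f\in\W$.

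\emph{The main obstacle} is making the comparison $|Lu|\ll|u|\ee^{-\Rea(\lambda z)/n}$ rigorous. The difficulty is that $u$ may be small or vanish at points of the ray, so a pointwise quotient bound alone does not control indicators. I would first try to prove the factorized bounds $|u|\leq C|h|^{n-1}|\delta|$ from below (on circles of fixed radius, away from zeros of $\delta$) and $|Lu|\leq C|h|^{n-2}|\delta|$ from above. I would then transfer these to indicators via the maximum principle on discs of fixed radius centred on the ray. If this proves delicate, the fallback is the same Rouch\'e-type zero counting as in Lemma~\ref{lem:antipodal}: apply it to $u$ and $Lu$ on circles around points $t\bar\lambda$, and use that $\log|u|$ and $\log|Lu|$ differ by $-\Rea(\lambda z)/n+O(1)$ there.
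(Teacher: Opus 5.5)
Your proposal is correct and follows the paper's proof essentially step for step: the sublinear estimate with $\rho=(n-1)/n$, Proposition~\ref{prop:decomposition} and Lemma~\ref{lem:antipodal}, the translation $u(z)=f(z+T)-f(z)$ with $K_u\subset\rho K\subset\D$, and the divided-difference factorization giving $|Lu|\le C\ee^{-t/n}|u|$ on the ray $z=t\bar\lambda$. From there you also match the contradiction with $K_{Lu}=K_u$ from Lemma~\ref{lem:operator} and the finite Laurent expansion.

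The ``main obstacle'' you flag is not one. Zeros of $u$ on the ray are zeros of $Lu$, so taking $\limsup_{t\to\infty}t^{-1}\log|\cdot|$ of the pointwise bound gives $I_{Lu}\le I_u-1/n$ directly. Together with \eqref{eq:indicator} for the ordinary upper indicator, this is all that is needed: no lower bounds, maximum-principle transfer or Rouch\'e count.

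The one point needing care is where you fix the root $\alpha$. Fix it on a single connected region containing both $z$ and $z+T$, such as a strip about the ray, rather than on the whole half-plane, where $c_\lambda\ee^{\lambda z}$ need not dominate $v$.
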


\begin{proof}
Put $\rho=(n-1)/n$. Comparing polynomial degrees at infinity and
then enlarging the constant on a bounded set gives
\[
 |S(w)|\leq C\bigl(1+|P(w)|^\rho\bigr)
 \qquad(w\in\C).
\]
Together with \eqref{eq:composition-recalled}, this gives
\eqref{eq:sublinear}. The polynomials $P$ and $P+S$ are
algebraically dependent, so $f$ and $f^{(k)}$ satisfy a nonzero
polynomial relation. Proposition~\ref{prop:entire-type} therefore
gives finite exponential type, and
Proposition~\ref{prop:decomposition} applies.
By Lemma~\ref{lem:antipodal}, choose $\lambda\in E$ so that
$E\subset\{\lambda,-\lambda\}$. Replacing $f(z),h(z)$ by
$f(z/\lambda),h(z/\lambda)$ preserves
\eqref{eq:composition-recalled}, since $\lambda^k=1$. We may thus suppose
that $\lambda=1$, and write
\begin{equation}\label{eq:two-terms}
 f(z)=c_+\ee^z+c_-\ee^{-z}+v(z),
 \qquad c_+\neq0,\qquad K_v\subset[-\rho,\rho].
\end{equation}
Also $K_f\subset[-1,1]$ by \eqref{eq:hull-control}. If $k$ is odd,
$-1\notin\mu_k$, so $c_-=0$ and the stronger inclusion
$K_f\subset[0,1]$ holds.

\smallskip
\noindent\emph{A coherent root asymptotic.}
On every fixed horizontal strip, \eqref{eq:uniform} and
\eqref{eq:two-terms} give
$f(z)=c_+\ee^z(1+o(1))$ as $\Rea z\to+\infty$, uniformly in the
imaginary part. In particular $|h(z)|\to\infty$ uniformly there.
If $p_n$ is the leading coefficient of $P$, then
\[
 \bigl(h(z)\ee^{-z/n}\bigr)^n\longrightarrow c_+/p_n
\]
uniformly on the strip. The $n$ roots of $c_+/p_n$ are distinct.
For sufficiently large real part, the continuous function
$h(z)\ee^{-z/n}$ lies in the union of disjoint small disks about
these roots. The right-hand part of the strip is connected, so its
image lies in only one of the disks. Shrinking the disks proves that
there is a single $\gamma$ for that strip such that
\begin{equation}\label{eq:root-asymptotic}
 h(z)=\gamma\ee^{z/n}(1+o(1)),
 \qquad \gamma^n=c_+/p_n.
\end{equation}
This establishes the consistency of the root; no branch is selected
independently on separate lines.

\smallskip
\noindent\emph{A translation with a small diagram.}
Let
\[
 T=2\pi\ii n,\qquad u(z)=f(z+T)-f(z).
\]
The two explicit exponentials in \eqref{eq:two-terms} disappear under
this subtraction. Translation invariance of convex carriers gives
\begin{equation}\label{eq:small-diagram}
 K_u\subset[-\rho,\rho]\subset\D.
\end{equation}
Choose a fixed horizontal strip containing both lines $\Im z=0$
and $\Im z=2\pi n$ in its interior. Using
\eqref{eq:root-asymptotic} in this one strip yields
\begin{equation}\label{eq:root-ratio}
 \frac{h(x+T)}{h(x)}\longrightarrow1
 \qquad(x\longrightarrow+\infty),
\end{equation}
since $\ee^{T/n}=1$.

For any polynomial $A$ define its polynomial divided difference by
\[
 \Delta A(s,t)=\frac{A(s)-A(t)}{s-t},
 \qquad \Delta A(t,t)=A'(t).
\]
Subtracting the two translated copies of
\eqref{eq:composition-recalled} gives, whenever the denominator below is
nonzero,
\begin{equation}\label{eq:translation-equation}
 Lu(z)=a_T(z)u(z),\qquad
 a_T(z)=\frac{\Delta S(h(z+T),h(z))}
                 {\Delta P(h(z+T),h(z))}.
\end{equation}
This identity is valid even where $h(z+T)=h(z)$, because the divided
differences are polynomials: no division by $h(z+T)-h(z)$ is needed.

By \eqref{eq:root-ratio} and \eqref{eq:root-asymptotic},
\[
 \Delta P(h(x+T),h(x))
       =np_nh(x)^{n-1}(1+o(1)).
\]
Thus this denominator is nonzero for all sufficiently large real
$x$. Since $\deg S\leq n-1$, its divided difference has total degree
at most $n-2$ unless it vanishes identically. It follows that
\begin{equation}\label{eq:decaying-coefficient}
 |a_T(x)|\leq C|h(x)|^{-1}
           \leq C'\ee^{-x/n}
 \qquad(x\geq x_0).
\end{equation}

Assume that $u\neq0$. Lemma~\ref{lem:operator} and
\eqref{eq:small-diagram} show that $Lu\neq0$ and $K_{Lu}=K_u$.
By the exact radial formula~\eqref{eq:indicator}, the finite real
number $\sigma=H_{K_u}(1)$ therefore satisfies
\[
 \limsup_{x\to\infty}\frac{\log|u(x)|}{x}
 =\sigma
 =\limsup_{x\to\infty}\frac{\log|Lu(x)|}{x}.
\]
On the other hand, \eqref{eq:translation-equation} and
\eqref{eq:decaying-coefficient} imply that the rightmost upper limit
is at most $\sigma-1/n$. This contradiction proves $u=0$. Hence
$f$ has period $2\pi\ii n$.

\smallskip
\noindent\emph{From a period to a finite Laurent expansion.}
Periodicity defines a holomorphic function $R$ on $\C^*$ by
$f(z)=R(\ee^{z/n})$. Choose a logarithm of $w\in\C^*$ with imaginary
part in $[0,2\pi)$. Finite exponential type then gives, for suitable
constants $A,B>0$,
\[
 |R(w)|\leq A\max\{|w|^B,|w|^{-B}\}.
\]
Consequently $0$ and $\infty$ are at most poles of $R$, so $R$ is a
Laurent polynomial. Its nonzero coefficient at $w^j$ corresponds
to a pole of $\B f$ at $j/n$. Since $K_f\subset[-1,1]$, only
$-n\leq j\leq n$ can occur. When $k$ is odd, the stronger inclusion
$K_f\subset[0,1]$ excludes the negative indices. Undoing the change
$z\mapsto z/\lambda$ proves \eqref{eq:laurent} and the last
assertion of the theorem.
\end{proof}

Two features of the argument merit emphasis. The estimates near the
zeros in Lemma~\ref{lem:antipodal} are uniform on circles of a fixed
radius; a merely pointwise radial estimate would not suffice there.
In the translation argument, only the identity between the ordinary
upper indicator and the support function is used. No assertion of
completely regular growth, and no pointwise lower growth bound on
$u$, is required.

\section{Completion of the classification}\label{sec:classification}

We now return to arbitrary meromorphic solutions. The analytic theorem
just proved closes precisely the polynomial-parametrization case of
Section~\ref{sec:reduction}.

\begin{equation}\label{eq:linear}
 y^{(k)}+ay+b=0,\qquad a,b\in\C;
\end{equation}

\begin{thm}\label{thm:main}
Let $k\geq1$, and let $y$ be a meromorphic function on $\C$ satisfying
\eqref{eq:bb}. Then either $y\in\W$ or $y$ satisfies
\eqref{eq:linear} for some constants $a,b\in\C$. Conversely, every
function in $\W$ and every meromorphic solution of \eqref{eq:linear}
is algebraically dependent over $\C$ with its $k$th derivative.
\end{thm}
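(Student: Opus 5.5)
The proof splits into the two directions: the classification of solutions of \eqref{eq:bb}, and the converse that every function in $\W$ or in the linear family is algebraically dependent on its $k$th derivative.

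\emph{Classification.} Assume $y$ is a meromorphic solution of \eqref{eq:bb}; we may take $F$ irreducible. Constants and rational functions already lie in $\W$. If $y$ is transcendental and has at least one pole, the result of \cite{ELN09} gives that $y\in\W$ or $y$ satisfies \eqref{eq:linear}. So suppose $y$ is transcendental and entire. Corollary~\ref{cor:entire-poles} gives genus zero and $|V|\in\{1,2\}$. If $|V|=2$, Corollary~\ref{cor:two-poles} gives $y=R(\ee^{cz})\in\W$. If $|V|=1$, write $y=P(h)$ and $y^{(k)}=Q(h)$ with $\deg P=\deg Q=N$. When $N=1$, we get $h=(y-\beta)/\alpha$, so $y^{(k)}=Q(h)$ is affine in $y$; this is exactly \eqref{eq:linear}. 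When $N\ge2$, the rescaling at the end of Section~\ref{sec:reduction} produces $f(z)=y(\alpha z)$ satisfying \eqref{eq:composition-recalled}. Theorem~\ref{thm:composition} then gives $f\in\W$. Since $\W$ is invariant under $z\mapsto z/\alpha$, also $y\in\W$.

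\emph{Converse.} For $y\in\W$ we use the Weierstrass/Briot--Bouquet description.
\begin{itemize}
\item If $y$ is elliptic, $y^{(k)}$ is elliptic with the same lattice. Two elliptic functions on the same torus are algebraically dependent, because the field of elliptic functions has transcendence degree one.
\item If $y=R(\ee^{cz})$, then $y^{(k)}=R_k(\ee^{cz})$ for a rational $R_k$. Two rational functions of a single variable $w$ are algebraically dependent.
\item If $y$ is rational, $y^{(k)}$ is rational in $z$, and the same argument applies.
\end{itemize}
For a solution of \eqref{eq:linear}, the polynomial $F(X,Y)=Y+aX+b$ works directly. In every case the relation is nontrivial as long as $y$ is nonconstant; constants are trivial.

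\emph{Main obstacle.} The only delicate point is bookkeeping in the entire case. The reduction of Section~\ref{sec:reduction} must cover every possibility, including $y$ polynomial or $N=1$, and the hypotheses of Theorem~\ref{thm:composition} must be checked in the reduced form: $\deg S\le n-1$ holds because $c$ was chosen to cancel the leading coefficients. Care is also needed to quote \cite{ELN09} exactly as proved there for functions with a pole.
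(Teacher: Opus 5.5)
Your proposal is correct and follows the paper's proof essentially step for step: \cite{ELN09} handles solutions with a pole, Corollaries~\ref{cor:entire-poles} and~\ref{cor:two-poles} handle the entire case with $|V|=2$, and for $|V|=1$ you split into $N=1$ (the linear equation) and $N\geq2$ (the rescaling followed by Theorem~\ref{thm:composition}). The only difference is in the converse for elliptic $y$: you cite the fact that the field of elliptic functions has transcendence degree one, whereas the paper proves the needed dependence directly by counting the monomials $y^i(y^{(k)})^j$ against the dimension bound for functions whose poles are bounded by $ND_0$.
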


\begin{proof}
A constant or a polynomial belongs to $\W$. If $y$ has a pole, the
pole theorem \cite[Theorem~1]{ELN09} of Eremenko, Liao, and Ng applies, giving $y\in\W$. It remains to consider a
transcendental entire function $y\notin\W$.

By Corollary~\ref{cor:entire-poles}, $\Gamma$ has genus zero and
$|V|\in\{1,2\}$. If $|V|=2$, Corollary~\ref{cor:two-poles}
gives $y\in\W$, contrary to our assumption. Thus $|V|=1$,
and Section~\ref{sec:reduction} gives polynomial parametrizations
$y=P(h)$ and $y^{(k)}=Q(h)$ of the same degree.

If this degree were at least two, the rescaling described there
and Theorem~\ref{thm:composition} would give $y\in\W$.
The common degree is therefore one, and \eqref{eq:linear} follows.

For the converse, a solution of \eqref{eq:linear} already satisfies
a polynomial relation. If $y\in\C(z)$, then $y^{(k)}\in\C(z)$;
if $y=R(\ee^{az})$, then repeated application of
$a w\,\dd/\dd w$ shows that $y^{(k)}$ is rational in the same
exponential. Two rational functions of one variable are algebraically
dependent.

The same conclusion for elliptic functions can be seen without an
explicit Weierstrass equation. On the torus of a common period lattice,
let $D_0$ be an effective divisor bounding the poles of $y$ and
$y^{(k)}$. The functions $y^i(y^{(k)})^j$ with $i+j\leq N$ have
poles bounded by $ND_0$. The space of functions with that pole bound
has dimension at most $N\deg D_0+1$: their principal parts determine
them up to an additive constant, since a holomorphic function on a
compact surface is constant. For large $N$, the number
$(N+1)(N+2)/2$ of these monomials exceeds that bound. A nontrivial
linear relation between them is the required polynomial relation.
\end{proof}

Theorem~\ref{thm:main} has the following consequence for
irreducible equations.

\begin{cor}\label{cor:nonlinear}
If $F\in\C[X,Y]$ is irreducible of total degree at least two, then
every meromorphic solution of \eqref{eq:bb} belongs to $\W$.
\end{cor}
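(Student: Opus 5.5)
The corollary should follow directly from Theorem~\ref{thm:main}. I will rule out the linear alternative using irreducibility together with the degree hypothesis. Let $y$ be a meromorphic solution of \eqref{eq:bb}, where $F$ is irreducible of total degree at least two. By Theorem~\ref{thm:main}, either $y\in\W$, and we are done, or $y$ satisfies \eqref{eq:linear}, namely $y^{(k)}+ay+b=0$ for some constants $a,b$. It remains to show that in the second case $y$ is still in $\W$.

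Assume $y\notin\W$ and that \eqref{eq:linear} holds. Then $y$ is nonconstant, because constants belong to $\W$. Hence the map $z\mapsto(y(z),y^{(k)}(z))$ has a one-dimensional image, and that image lies on the line $\ell=\{Y+aX+b=0\}$. It also lies on the curve $F=0$.

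The two polynomials $F$ and $G=Y+aX+b$ vanish simultaneously at the infinitely many distinct points $(y(z),y^{(k)}(z))$. There are infinitely many such points because $y$ is nonconstant, so $y$ takes infinitely many values and the first coordinates are already distinct. By B\'ezout's theorem, or more elementarily by the fact that two coprime plane polynomials have only finitely many common zeros, $F$ and $G$ share a nonconstant common factor. Since $G$ has degree one, it is irreducible, so $G$ divides $F$. But $F$ is irreducible of degree at least two, so this is impossible. Therefore the linear alternative cannot occur for $y\notin\W$, and hence $y\in\W$.

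The only delicate point is to justify that the common zero set is infinite. To handle it, I would use only that the nonconstant meromorphic function $y$ takes infinitely many finite values, for example by the open mapping theorem. I do not expect any serious obstacle: this step is a routine consequence of the main theorem.
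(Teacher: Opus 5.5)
Your proof is correct and follows the paper's argument: assume $y\notin\W$, use Theorem~\ref{thm:main} to get $y^{(k)}=-ay-b$, and conclude that $Y+aX+b$ divides the irreducible $F$, which forces total degree one. The only difference is cosmetic: the paper gets the divisibility more elementarily, by noting that the one-variable polynomial $F(X,-aX-b)$ vanishes on the infinite range of $y$, hence is identically zero, and then dividing in $Y$, instead of invoking the B\'ezout-type fact that coprime polynomials have finitely many common zeros.
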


\begin{proof}
Suppose that a solution $y$ does not belong to $\W$. It is nonconstant,
and Theorem~\ref{thm:main} gives $y^{(k)}=-ay-b$. Hence
\[
 F(y,-ay-b)=0.
\]
The one-variable polynomial $F(X,-aX-b)$ vanishes on the infinite
range of $y$ and is therefore identically zero. Polynomial division
in $Y$ shows that $Y+aX+b$ divides $F(X,Y)$. Irreducibility forces
$F$ to have total degree one, contrary to the hypothesis.
\end{proof}
\begin{proof}[Proof of Theorem~\ref{thm:pdf-main}]
Let $y$ be a transcendental entire solution of \eqref{eq:bb},
where $F$ is irreducible of total degree at least two.
By Corollary~\ref{cor:nonlinear}, we have $y\in\W$.
An entire rational function is a polynomial, and an entire
elliptic function is constant. Both possibilities are excluded
by the transcendence of $y$. Hence
\[
 y(z)=R(e^{cz})
\]
for some rational function $R$ and some $c\in\C^*$.
\end{proof}

We now solve \eqref{eq:linear} and determine which solutions
belong to $\W$.

\begin{prop}\label{prop:explicit}
Put $\omega=\exp(2\pi\ii/k)$. The meromorphic functions on $\C$
that are algebraically dependent over $\C$ with their $k$th
derivatives form exactly the set
\begin{equation}\label{eq:full-family}
 \W\ \cup\
 \left\{
 C+\sum_{j=0}^{k-1}C_j\exp(\lambda\omega^j z):
 \lambda\in\C\setminus\{0\},\ C,C_j\in\C
 \right\}.
\end{equation}
A nonconstant member of the exponential family in
\eqref{eq:full-family} belongs to $\W$ if and only if its nonzero
frequencies with nonzero coefficients are contained in an antipodal
pair $\{\eta,-\eta\}$.
\end{prop}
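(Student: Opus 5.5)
The first assertion follows from Theorem~\ref{thm:main} once the meromorphic solutions of \eqref{eq:linear} are written down explicitly; the second assertion is a separate check on exponential sums.

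\emph{Solving \eqref{eq:linear}.} Every solution of a linear equation with constant coefficients is entire, so meromorphic solutions are entire. I split into cases according to $a$.
\begin{itemize}
\item If $a\neq0$, put $C=-b/a$. Then $y-C$ solves $u^{(k)}+au=0$. Choose $\lambda$ with $\lambda^k=-a$. The characteristic roots are the $k$ distinct numbers $\lambda\omega^j$, so $y=C+\sum_j C_j\exp(\lambda\omega^jz)$.
\item If $a=0$, then $y^{(k)}=-b$ and $y$ is a polynomial, which lies in $\W$.
\end{itemize}
Conversely, every member of the family solves \eqref{eq:linear} with $a=-\lambda^k$ and $b=a\cdot(-C)$, i.e.\ $b=\lambda^kC$. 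Combined with both directions of Theorem~\ref{thm:main}, this shows that the set of meromorphic functions algebraically dependent with their $k$th derivatives is exactly \eqref{eq:full-family}.

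\emph{The criterion, easy direction.} Suppose $y=C+A e^{\eta z}+B e^{-\eta z}$. Then $y=R(e^{\eta z})$ with $R(w)=C+Aw+B/w$ rational, so $y\in\W$.

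\emph{The criterion, converse.} Let $y=C+\sum_{\eta\in F}C_\eta e^{\eta z}$ be nonconstant, with $F$ the set of nonzero frequencies carrying nonzero coefficients, and suppose $y\in\W$.
\begin{itemize}
\item $y$ is entire and nonconstant. An elliptic function that is entire is constant, and a rational entire function is a polynomial. A polynomial cannot equal a nonconstant exponential sum with nonzero frequencies, because exponentials with distinct frequencies are linearly independent over $\C[z]$. Hence $y=R(e^{cz})$ with $c\neq0$ and $R$ rational.
\item Since $y$ is entire and $e^{cz}$ takes every value in $\C^*$, $R$ has no poles in $\C^*$. Thus $R$ is a Laurent polynomial, and $y=\sum_m r_m e^{mcz}$ is a finite sum.
\item By uniqueness of exponential-sum representations (linear independence of distinct exponentials), $F\subset c\Z\setminus\{0\}$.
\item The frequencies in $F$ are of the form $\lambda\omega^j$, so they all have the same modulus $|\lambda|$. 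The only points of $c\Z$ with modulus $|\lambda|$ are $\pm mc$ for the single integer $m$ with $|mc|=|\lambda|$. Hence $F\subset\{\eta,-\eta\}$ with $\eta=mc$.
\end{itemize}

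The main subtlety is in the converse: one must exclude the degenerate members of $\W$ (polynomials, elliptic functions) carefully before using uniqueness of exponential expansions, and this linear-independence fact should be stated explicitly. The rest is routine bookkeeping.
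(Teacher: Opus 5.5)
Your proposal is correct and matches the paper's proof step by step. You solve \eqref{eq:linear} by splitting $a=0$ from $a\neq0$ with characteristic roots $\lambda\omega^j$, and combine this with both directions of Theorem~\ref{thm:main}. For the criterion you rule out rational and elliptic members before applying linear independence of exponentials and the equal moduli $|\lambda\omega^j|=|\lambda|$.

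There are two cosmetic differences. You get poles excluded from the general fact that solutions of linear equations with constant coefficients are entire, whereas the paper compares pole orders. You place the frequencies in the lattice $c\Z$ through the Laurent-polynomial form of $R$, whereas the paper uses the period $2\pi\ii/c$ to obtain $\ee^{\eta T}=1$ and hence rational frequency ratios.
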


\begin{proof}
A meromorphic solution of \eqref{eq:linear} cannot have a pole:
the pole order of $y^{(k)}$ would be strictly larger than that of
$ay+b$. If $a=0$, integration shows that $y$ is a polynomial of
degree at most $k$, already in $\W$. If $a\neq0$, choose
$\lambda^k=-a$ and put $C=-b/a$. The characteristic roots
$\lambda\omega^j$ are distinct. Their exponentials give all
solutions of the homogeneous equation: their derivatives of orders
$0,\ldots,k-1$ at zero form a nonsingular Vandermonde matrix, and
the differential equation determines all higher Taylor coefficients
from these initial data. Thus the solutions are precisely the
functions displayed in \eqref{eq:full-family}. Conversely, each
such function satisfies
$y^{(k)}-\lambda^k y+\lambda^k C=0$.

For the overlap statement, discard the terms with zero coefficient
and write the remaining distinct frequencies as
$\eta_1,\ldots,\eta_s$, with coefficients $d_1,\ldots,d_s\neq0$.
Distinct exponentials are linearly independent: differentiating
orders $0,\ldots,s-1$ at zero again gives a Vandermonde matrix.
A nonzero period $T$ would therefore imply
\begin{equation}\label{eq:period-frequencies}
 \ee^{\eta_\ell T}=1\qquad(1\leq\ell\leq s).
\end{equation}
For any two frequencies, \eqref{eq:period-frequencies} makes their
ratio real and rational. All the frequencies have the same modulus,
so their ratio must be $1$ or $-1$. They consequently lie in one
antipodal pair.

A nonconstant exponential sum here cannot be rational: if it were,
being entire it would be a polynomial, whereas a sufficiently high
derivative and linear independence of the exponentials rule this
out. Nor can it be elliptic, since a nonconstant elliptic function
has a pole. Thus membership in $\W$ implies representation as a
rational function of an exponential and hence a nonzero period;
the preceding argument applies. Conversely, frequencies in one
antipodal pair give a Laurent polynomial in $\ee^{\eta z}$, which
belongs to $\W$.
\end{proof}

The low-order classifications and the necessity of the exceptional
family are now immediate, with no additional assumptions on the
polynomial relation.

\begin{cor}\label{cor:low-order}
For $k=1$ or $k=2$, every meromorphic solution of \eqref{eq:bb}
belongs to $\W$. For each $k\geq3$, there are entire solutions of
\eqref{eq:bb} outside $\W$.
\end{cor}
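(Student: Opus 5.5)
The plan is to read both assertions off Theorem~\ref{thm:main} and Proposition~\ref{prop:explicit}. These results describe the meromorphic solutions of \eqref{eq:bb} as the class $\W$ together with the exponential family in \eqref{eq:full-family}. They also say exactly when a member of that family lies in $\W$. Since constants belong to $\W$, it is enough to study the nonconstant members of the family.

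For $k=1$ and $k=2$, let $y$ be a meromorphic solution of \eqref{eq:bb}. By Theorem~\ref{thm:main}, either $y\in\W$ or $y$ satisfies \eqref{eq:linear}, so by Proposition~\ref{prop:explicit} it is a polynomial (the case $a=0$, already in $\W$) or has the form $C+\sum_{j=0}^{k-1}C_j\exp(\lambda\omega^jz)$. When $k=1$ the family has the single frequency $\lambda$. When $k=2$ we have $\omega=-1$, so the frequencies are $\lambda$ and $-\lambda$, which form an antipodal pair. In both cases the membership criterion of Proposition~\ref{prop:explicit} puts $y$ in $\W$. One can also see this directly: $y$ is a Laurent polynomial in $\ee^{\lambda z}$, hence of the form $R(\ee^{\lambda z})$.

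For $k\geq3$, I would take $y(z)=\ee^{z}+\ee^{\omega z}$ with $\omega=\exp(2\pi\ii/k)$. Then $y^{(k)}=y$, so $F(X,Y)=Y-X$ gives an equation \eqref{eq:bb} with this entire solution. The two frequencies $1$ and $\omega$ are nonzero and distinct. They are not antipodal, because $\omega\neq-1$ when $k\geq3$. Hence the nonzero frequencies cannot lie in any single pair $\{\eta,-\eta\}$: such a pair would have to contain $1$ and $\omega$, which forces $\omega=-1$. Proposition~\ref{prop:explicit} then shows $y\notin\W$.

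I do not expect a real obstacle. The one point to check is that the criterion in Proposition~\ref{prop:explicit} applies to exactly these coefficient choices, i.e. $C=0$, $C_0=C_1=1$ and all other $C_j=0$, and that the discarded zero terms do not matter. The criterion is stated only in terms of frequencies with nonzero coefficients, so this holds directly.
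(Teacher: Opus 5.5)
Your proposal is correct and matches the paper's proof. For $k=1,2$ the characteristic frequencies $\lambda\omega^j$ lie in one antipodal pair, so Proposition~\ref{prop:explicit} puts every solution in $\W$; for $k\geq3$ the function $\ee^z+\ee^{\omega z}$ satisfies $y^{(k)}=y$ but has the non-antipodal frequencies $1$ and $\omega$, so it lies outside $\W$. Your extra checks, that $\omega\neq\pm1$ for $k\geq3$ and that the criterion only counts frequencies with nonzero coefficients, are the details the paper leaves implicit.
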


\begin{proof}
For $k=1,2$, the characteristic frequencies in
Proposition~\ref{prop:explicit} are contained in one antipodal pair.
For $k\geq3$, take $\omega=\exp(2\pi\ii/k)$ and
\[
 y(z)=\ee^z+\ee^{\omega z}.
\]
Then $y^{(k)}=y$, whereas the two frequencies are not antipodal.
Proposition~\ref{prop:explicit} therefore gives $y\notin\W$.
\end{proof}

The first assertion recovers the classical first- and second-order
theorems. The second shows that the linear clause in
Theorem~\ref{thm:main} is indispensable. More structurally, the proof
identifies the only obstruction to the classical classification:
a solution outside $\W$ forces the irreducible algebraic relation
between the function and its derivative to be a line.


\end{document}